\documentclass{amsart}
\usepackage{amssymb}
\usepackage{verbatim,tikz}
\usepackage{mathrsfs, amsthm}
\usepackage{dsfont}
\usepackage[all]{xy}
\usepackage{eurosym}


\newcommand\II{\text{II}}
\newcommand\I{\text{I}}
\newcommand\IR{\text{I,R}}
\newcommand\IL{\text{I,L}}

\newcommand\mr{\operatorname{mr}}
\newcommand\LT{{}^LT}

\newcommand\Pl{\text{Pl}}
\newcommand\ilog{\operatorname{ilog}}
\newcommand\Mm{M_{\mr}}
\newcommand\tq{\frac34}


%
\newcommand\mrurl[1]{MR. \url{http://www.ams.org/mathscinet-getitem?mr=#1}}
\newcommand\aurl[1]{\url{file:///data/ms/archive/#1}}
\newcommand\durl[1]{doi. \url{http:///dx.doi.org/#1}}
\newcommand\purl[1]{\url{file:///data/ms/print/#1}}
\newcommand\surl[1]{\url{file:///data/ms/screen/#1}}
\newcommand\zurl[1]{Zbl. \url{http://www.emis.de/zmath-item?#1}}
\newcommand\Aurl[1]{Arxiv. \url{http://arxiv.org/abs/#1}}
\newcommand\datver[1]{\def\datverp%
 {\par\boxed{\boxed{\text{#1; Run: \today}}}}}

\newcommand\boxb[1]{\square_b}

\numberwithin{equation}{section}

\newcommand\paperbody%
        {}



\newtheorem{lemma}{Lemma}
\newtheorem{proposition}{Proposition}

\newtheorem*{theorem*}{Theorem}
\newtheorem{non-theorem}{Non-Theorem}

\theoremstyle{remark}

\newcommand\bV{\mathcal{V}_{\operatorname{b}}}


\newcommand\bo{\operatorname{b}}

\newcommand\bT{{}^{\bo}T}



\newcommand\cFTs{{}^{\Phi}\overline{T}\kern-1pt{}^*}

















\newcommand\fib{\operatorname{fib}}

\newcommand\Nul{\operatorname{Nul}}




\hyphenation{para-met-rix}
\newcommand\dbar{\overline{\pa}}






\newcommand\cA{\mathcal{A}}

\newcommand\cI{\mathcal{I}}

\newcommand\cP{\mathcal{P}}

\newcommand\cU{\mathcal{U}}

\newcommand\cW{\mathcal{W}}


\newcommand\bbC{\mathbb C}
\newcommand\bbD{\mathbb D}

\newcommand\bbN{\mathbb N}

\newcommand\bbR{\mathbb R}
\newcommand\bbS{\mathbb S}

\newcommand\bbZ{\mathbb Z}

\newcommand\CIc{{\mathcal{C}}^{\infty}_\text{c}}

\newcommand\CI{{\mathcal{C}}^{\infty}}

\newcommand\CmI{{\mathcal{C}}^{-\infty}}

\newcommand\Diff[1]{\operatorname{Diff}^{#1}}

\newcommand\Diffb[1]{\operatorname{Diff}^{#1}_{\text{b}}}





\newcommand\cFNs{{}^{\Phi}\overline N\kern-1pt{}^*}

\newcommand\reg{\operatorname{reg}}

\newcommand\Lap{\varDelta}

\newcommand\ci{${\mathcal{C}}^\infty$}

\newcommand\dCIc{\dot{\mathcal{C}}^{\infty}_c}
\newcommand\dCI{\dot{\mathcal{C}}^{\infty}}

\newcommand\ha{\frac{1}{2}}

\newcommand\pa{\partial}

\renewcommand\Re{\operatorname{Re}}
\renewcommand\Im{\operatorname{Im}}

\newcommand\Mand{\text{ and }}

\newcommand\Mst{\text{ s.t. }}

\newcommand\Mwith{\text{ with }}

\datver{0.1B; Revised: 13-10-2014}
\begin{document}
\title[Metrics and Lefschetz fibrations]
{Resolution of the canonical fiber metrics for a Lefschetz fibration}

\author{Richard Melrose}
\author{Xuwen Zhu}
\address{Department of Mathematics, Massachusetts Institute of Technology}
\email{rbm@math.mit.edu}
\address{Department of Mathematics, Massachusetts Institute of Technology}
\email{xuwenz@math.mit.edu}

\begin{abstract} We consider the family of constant curvature fiber metrics
  for a Lefschetz fibration with regular fibers of genus greater than
  one. A result of Obitsu and Wolpert is refined by showing that on an
  appropriate resolution of the total space, constructed by iterated
  blow-up, this family is log-smooth, i.e.\ polyhomogeneous with integral
  powers but possible multiplicities, at the preimage of the singular
  fibers in terms of parameters of size comparable to the logarithm of the
  length of the shrinking geodesic.
\end{abstract}

\maketitle


\section*{Introduction}

In the setting of complex surfaces, a Lefschetz fibration is a holomorphic
map to a curve, generalizing an elliptic fibration in that it has only a
finite number of singular points near which it is holomorphically reducible
to normal crossing. Donaldson \cite{MR1648081} showed that a
four-dimensional simply-connected compact symplectic manifold, possibly
after stabilization by a finite number of blow-ups, admits a Lefschetz
fibration, in an appropriately generalized sense, over the sphere;
Gompf~\cite{gompf1995new} showed the converse.  The reader is referred to
the book of Gompf and Stipsicz \cite{robert19994} for a description of the
important role played by Lefschetz fibrations in the general theory of
4-manifolds.

To cover these cases we consider a compact connected almost-complex
4-manifold $M$ and a smooth map, with complex fibers, to a Riemann surface
$Z$
\begin{equation}
\xymatrix{
M\ar[r]^{\psi}&Z.
}
\label{MetLef.2}\end{equation}
We then require that this map be pseudo-holomorphic, have surjective
differential outside a finite set $F\subset M,$ on which $\psi$ is
injective, so $\psi:F\longleftrightarrow S\subset M,$ and near each of these
singular points be reducible to the normal crossing, or plumbing variety,
model \eqref{MetResMod.3} below.

A curve of genus $g$ with $b$ punctures is stable if its automorphism group
is finite, which is the case when $3g-3+b>0.$ In this paper we discuss Lefschetz
fibrations with regular fibers having genus $g>1$ and hence stable. All
fibers carry a unique metric of curvature $-1,$ for the singular fibers
with cusp points replacing the nodes. In view of uniqueness and stability,
these metrics necessarily vary smoothly near a regular fiber. We discuss
here the precise uniform behavior of this family of metrics near the
singular fibers, showing that in terms of appropriate (logarithmic)
resolutions, of both the total and parameter spaces, to manifolds with
corners the resulting fiber metric is polyhomogeneous and more particularly
log-smooth, i.e.\ essentially smooth except for the appearance of
logarithmic terms in the expansions at boundary surfaces. This refines a
result of Obitsu and Wolpert \cite{MR2399166} who gave the first two terms
in the expansion. In a forthcoming paper the universal case of the
Deligne-Mumford compactification of the moduli space of Riemann surfaces,
also treated by Obitsu and Wolpert, will be discussed.

The local model for degeneration for the complex structure on a Riemann
surface to a surface with a node is the `plumbing variety' with its
projection to the parameter space. We add boundaries, away from the
singularity at the origin, to make this into a manifold with corners:
\begin{equation}
\begin{gathered}
P=\{(z,w)\in\bbC^2;\ \exists\ t\in\bbC, \ zw=t,\ |z|\le\tq,\ |w|\le\tq,\ |t|\le\ha\}\\
P\overset{\phi}\longrightarrow\bbD_{\ha}=\{t\in\bbC;|t|\le\ha\}.
\end{gathered}
\label{MetResMod.3}\end{equation}
Thus near each point of $F$ we require that $\psi$ can be reduced to $\phi$
in (almost) holomorphic coordinates in $M$ and $Z.$ 

A (real) manifold with corners $M$ has a principal ideal
$\cI_F\subset\CI(M)$ corresponding to each boundary hypersurface (by
assumption embedded and connected) generated by a boundary defining
function $\rho_F\ge0$ with $F=\{\rho _F=0\}$ and $d\rho _F\not=0$ on $F.$ A
smooth map between manifolds with corners $f:M\longrightarrow Y$ is an
interior b-map if each of these ideals on $Y$ pulls back to non-trivial
finite products of the corresponding ideals on $M,$ it is b-normal if there
is no common factor in these product decompositions -- this is always the
case here since the range space is a manifold with boundary. Such a map is
a b-fibration if in addition every smooth vector field tangent to all
boundaries on $Y$ is locally (and hence globally) $f$-related to such a
vector field on $M;$ it is then surjective. There is a slightly weaker
notion than a manifold with corners, a \emph{tied} manifold, which has the
same local structure but in which the boundary hypersurfaces need not be
embedded, meaning that transversal self-intersection is allowed. This
arises below, although not in any essential way. There is still a principal
ideal associated to each boundary hypersurface and the notions above carry over.

The assumptions above mean that each singular fiber of $\psi$ has one
singular point at which it has a normal crossing in the (almost) complex
sense as a subvariety of $M.$ The first step in the resolution is the blow
up, in the real sense, of the singular fibers; this is well-defined in view
of the transversality of the self-instersection but results in a tied
manifold since the boundary faces are not globally embedded. The second
step is to replace the \ci\ structure by its logarithmic weakening,
i.e.\ replacing each (local) boundary defining function $x$ by 
\begin{equation*}
\ilog x=(\log x^{-1})^{-1}.
\label{MetLef.26}\end{equation*}
This gives a new tied manifold mapping smoothly to the previous one by a
homeomorphism. These two steps can be thought of in combination as the
`logarithmic blow up' of the singular fibers. The final step is to blow up
the corners, of codimension two, in the preimages of the singular
fibers. This results in a manifold with corners, $M_{\mr},$ with the two
boundary hypersurfaces denoted $B_{\I},$ resolving the singular fiber, and
$B_{\II}$ arising at the final stage of the resolution. The parameter space
$Z$ is similarly resolved to a manifold with corners by the logarithmic
blow up of each of the singular points.

It is shown below that the Lefschetz fibration lifts to a smooth map 
\begin{equation}
\xymatrix{
\Mm \ar[r]^{\psi_{\mr}}&Z_{\mr}
}
\label{MetLef.3}\end{equation}
which is a b-fibration. In particular it follows from this that smooth
vector fields on $M_{\mr}$ which are tangent to all boundaries and to the
fibers of $\psi_{\mr}$ form the sections of a smooth vector subbundle of
$\bT M_{\mr}$ of rank two. The boundary hypersurface $B_{\II}$ has a preferred class of
boundary defining functions, an element of which is denoted $\rho _{\II},$
arising from the logarithmic nature of the resolution, and this allows a
Lie algebra of vector fields to be defined by
\begin{equation}
V\in\CI(M_{\mr};\bT M_{\mr}),\ V\psi^*\CI(Z_{\mr})=0,\ V\rho _{\II}\in\rho _{\II}^2\CI(M_{\mr}).
\label{MetLef.4}\end{equation}
The possibly singular vector fields of the form $\rho _{\II}^{-1}V,$ with
$V$ as in \eqref{MetLef.4}, also form all the sections of a smooth vector
bundle, denoted $\LT M_{\mr}.$ This vector bundle inherits a complex
structure and hence has a smooth Hermitian metric, which is unique up to a
positive smooth conformal factor on $M_{\mr}.$ The main result of this
paper is:

\begin{theorem*}\label{MetLef.5} The fiber metrics of fixed constant
curvature on a Lefschetz fibration, in the sense discussed above, extend to
a continuous Hermitian metric on $\LT M_{\mr}$ which is related to a
smooth Hermitian metric on this complex line bundle by a log-smooth
conformal factor.
\end{theorem*}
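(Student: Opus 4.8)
The plan is to reduce the theorem to the solution of a single fibrewise Liouville equation for a conformal factor, measured against a carefully chosen \emph{model} metric that is manifestly log-smooth on $\Mm$, and then to solve that equation by a formal iteration followed by a contraction argument, both steps controlled by a uniform analysis of the linearization on $\Mm$. Concretely, I would first fix a smooth Hermitian metric on $\LT\Mm$ and, using the explicit plumbing model \eqref{MetResMod.3}, modify it near $B_{\I}\cup B_{\II}$ by grafting in the exact hyperbolic collar metric of a long degenerating neck; the logarithmic weakening $x\mapsto\ilog x$ built into the resolution makes the relevant boundary defining function $\rho_{\II}$ comparable to the length of the shrinking geodesic (equivalently $\rho_{\II}\sim1/\log(1/|t|)$). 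Call the resulting metric $G$: it is a log-smooth Hermitian metric on $\LT\Mm$ whose fibrewise Gauss curvature $K_{G}$ satisfies $K_{G}+1\in\rho_{\I}^{a}\rho_{\II}^{b}\CI(\Mm)$ for some $a,b>0$, i.e.\ $G$ has curvature $-1$ to leading order at each boundary hypersurface; organizing the Obitsu--Wolpert grafting globally on $\Mm$ is the substance of this step. The required metric is then $g=e^{2u}G$, where, in the convention $\Lap\ge0$, the conformal factor solves along each fibre
\begin{equation*}
\Lap_{G}u+(e^{2u}-1)=-(K_{G}+1),
\end{equation*}
and the theorem becomes the assertion that the unique such $u$ is log-smooth on $\Mm$ and vanishes to positive order at $B_{\I}$ and $B_{\II}$, so that $e^{2u}$ is a log-smooth conformal factor relating $g$ to the smooth Hermitian metric on $\LT\Mm$.

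The heart of the matter is the linearization: at a solution the left side linearizes, up to a positive smooth factor, to the shifted fibrewise Laplacian $L=\Lap_{g}+2$. Since $\Lap_{g}\ge0$ on each compact fibre, $L\ge2$ is invertible with $\|L^{-1}\|_{L^{2}}\le\tfrac12$, uniformly in the parameter, so an abstract solution exists; the real work is to show that $L^{-1}$ sends log-smooth data vanishing at $B_{\I}\cup B_{\II}$ to log-smooth functions \emph{uniformly on $\Mm$}, and that the logarithmic terms genuinely appear. For this I would (i) identify the pseudodifferential calculus attached to the Lie algebra of vector fields \eqref{MetLef.4} underlying $\LT\Mm$ — the fibred-boundary type structure at $B_{\II}$ for which $L$ is uniformly elliptic in the associated degenerate symbolic sense; (ii) compute the normal operator of $L$ at $B_{\I}$, which is $\Lap+2$ for the complete finite-area hyperbolic metric on the nodal fibre with its cusps, invertible with polyhomogeneous resolvent by the cusp (b-)calculus since $-2$ lies below the spectrum of $\Lap$; and (iii) compute the normal operator at $B_{\II}$, the model on the rescaled hyperbolic neck, where the single small eigenvalue $\lambda_{1}(t)\sim\rho_{\II}$ of $\Lap_{g}$ and the logarithmic length of the collar force the indicial roots governing the matching across the corner $B_{\I}\cap B_{\II}$ to be \emph{integers} — the attendant resonances being the precise source of the logarithmic terms, which is why even smooth forcing produces only log-smooth (not smooth) solutions. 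Assembling a parametrix from the symbol inverse and the inverse normal operators, with a smoothing remainder on the appropriate weighted b-Sobolev spaces on $\Mm$, then yields both the uniform invertibility of $L$ and the log-smooth mapping property of $L^{-1}$.

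Finally I would run the iteration. Using (ii)--(iii), solve away the polyhomogeneous expansion of $K_{G}+1$ order by order at $B_{\I}$ and $B_{\II}$ to produce a log-smooth formal solution $u_{\infty}$, vanishing at both faces, with $\Lap_{G}u_{\infty}+(e^{2u_{\infty}}-1)+(K_{G}+1)$ vanishing to infinite order there; Borel-summing and writing $u=u_{\infty}+v$ reduces the problem to a genuinely small nonlinear equation for $v$, solvable by the contraction mapping principle in a weighted Sobolev space on $\Mm$ (using the invertibility of $L$ from the previous step), after which a standard bootstrap in the calculus of step (i) shows $v$ to be log-smooth. Then $u=u_{\infty}+v$ is log-smooth and vanishes at $B_{\I},B_{\II}$, which is the theorem. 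I expect the principal obstacle to be step (iii): choosing the rescaled neck model so that the degenerating small eigenvalue of $\Lap_{g}$ becomes a genuinely \emph{resolved}, smooth object on $\Mm$, and tracking how its interaction with the logarithmic collar length produces exactly the claimed logarithmic multiplicities — which is precisely the refinement of Obitsu--Wolpert being asserted.
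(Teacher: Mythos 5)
Your outline reproduces the architecture of the paper's argument: graft the explicit plumbing/collar metric onto a smooth Hermitian metric on $\LT\Mm$ to get a background $h$ with curvature $-1$ near $B_{\II}$ and $-1+O(s_t^2)$ near $B_{\I}$; reduce to the fibrewise Liouville equation for the conformal factor; note that $\Lap+2\ge 2$ gives uniform $L^2$ invertibility; solve formally at $B_{\I}$ and $B_{\II}$ with integer indicial roots producing the logarithmic terms; Borel-sum and remove the rapidly vanishing remainder by a contraction/implicit function argument in weighted b-Sobolev spaces. All of that matches, including the correct identification of integer indicial roots at the two faces as the source of the log-smooth (rather than smooth) behaviour.

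The one place where you take a genuinely different, and substantially heavier, route is the uniform analysis of $(\Lap+2)^{-1}$. You propose to build a full parametrix in a degenerate pseudodifferential calculus adapted to the Lie algebra \eqref{MetLef.4}, with normal operators at $B_{\I}$ and $B_{\II}$ inverted in cusp/neck model calculi. The paper deliberately avoids this: it proves boundedness of $(\Lap+2)^{-1}$ on $\rho_{\II}^{-\ha}H^k_{\bo}(\Mm)$ by elementary Hilbert-space methods --- a Fubini-type identification of the metric $L^2$ space as $\rho_{\II}^{-\ha}L^2_{\bo}(\Mm)$, the Dirichlet space $D$ and its dual $D'$, and commutator arguments with the two Lie algebras $\cW\subset\rho_{\II}^{-1}\bV(\Mm)$ and $\cU\subset\bV(\Mm)$ to propagate tangential regularity. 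The indicial (reduced) operators at $B_{\I}$ and $B_{\II}$ are used only for the formal power-series solution, where they are explicit regular-singular ODEs, not as normal operators whose inverses must be shown polyhomogeneous on model spaces. Your parametrix route would, if carried out, give more (a structure theorem for the Schwartz kernel of $(\Lap+2)^{-1}$), but it requires constructing and composing in a calculus at $B_{\II}$ that the paper never needs; in particular your step (iii), resolving the degenerating small eigenvalue on the neck, is exactly the kind of difficulty the paper's energy-estimate approach sidesteps, since $\Lap+2$ has no small spectrum and the logarithms are extracted purely from the formal iteration. If you pursue your version, be aware that the tangential regularity (conormality) still has to be fed into the formal expansion separately --- the parametrix alone does not organize the interleaved $\cP^{k,j}_{\I}$, $\cP^{k,m}_{\II}$ bookkeeping of the double expansion at the corner, which is where most of the actual work in the paper lies.
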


The notion of log-smoothness here, for a function, is the same as
polyhomogeneous conormality with non-negative integral powers and linear
multiplicity of slope one. Conormality in this context for
$f:M_{\mr}\longrightarrow \bbR$ can be interpreted as the `symbol
estimates' that
\begin{equation}
f\in\cA(M_{\mr})\Longleftrightarrow \Diffb*(M_{\mr})f\subset L^{\infty}(M_{\mr})
\label{MetLef.6}\end{equation}
which in fact implies that the space of these functions is stable under the
action, $\Diffb*(M_{\mr})\cA(M_{\mr})\subset\cA(M_{\mr}).$
Polyhomogeneity means the existence of appropriate expansions at the
boundary. On a manifold with boundary, $M,$ log-smoothness of a conormal
function $f\in\cA(M)$ means the existence of an expansion at the boundary,
generalizing the Taylor series of a smooth function, so for any product
decomposition near the boundary with boundary defining function $x,$ there
exist coefficients $a_{j,k}\in\CI(\pa M),$ $j\ge0,$ $j\ge k\ge0$ such that
for any finite $N,$
\begin{equation}
f-\sum\limits_{j\le N,0\le k\le j} a_{j,k}x^j(\log x)^k\in
x^N\cA([0,1)\times\pa M),\ \forall\ N.
\label{MetLef.7}\end{equation}
We denote the linear space of such functions $\CI_{\log}(M),$ it is
independent of choices.

In the case of a manifold with corners the definition may be extended by
iteration of boundary codimension. Thus $f\in\CI_{\log}(M_{\mr})$ if for
any product decompositions of $M_{\mr}$ near the two boundaries there are
corresponding coefficients $a_{j,k,b}\in\CI_{\log}(B_{b}),$ $b=\I,\II,$ such
that  
\begin{equation}
f-\sum\limits_{j\le N,0\le k\le j}a_{j,k,b}x_b^j(\log x_b)^k\in
x_{b}^N\cA([0,1)\times B_b),\ b=\I,\II,\ \forall\ N.
\label{MetLef.8}\end{equation}
There are necessarily compatibility conditions between the two expansions
at the corners, $B_{\I}\cap B_{\II},$ and together they determine $f$ up to
a smooth function on $M_{\mr}$ vanishing to infinite order on both
boundaries. In this sense the conformal factor in the main result above is
`essentially smooth'.

In the model setting, \eqref{MetResMod.3}, there is an explicit family of
fiber metrics, the `plumbing metric', of curvature $-1,$
\begin{equation}
\begin{gathered}
g_P=(\frac{\pi \log|z|}{\log|t|} \csc \frac{\pi \log|z|}{\log|t|})^2ds_0^2,\\
g_0=(\frac{|dz|}{|z|\log |z|})^2.
\end{gathered}
\label{MetResMod.4}\end{equation}
This metric can be extended (`grafted' as in \cite{MR2399166}) to give an
Hermitian metric on $\LT M_{\mr}$ which has curvature $R$ equal to
$-1$ near $B_{\II}$ and to second order at $B_{\I}.$ We prove the Theorem
above by constructing the conformal factor $e^{2f}$ for this metric which
satisfies the curvature equation, ensuring that the new metric has
curvature $-1:$
\begin{equation}
(\Lap+2)f+(R+1)=-e^{2f}+1+2f=O(f^2).
\label{MetLef.9}\end{equation}

This equation is first solved in the sense of formal power series (with logarithms)
at both boundaries, $B_{\I}$ and $B_{\II},$ which gives us an approximate
solution $f_0$ with
$$
-\Lap f_0=R+e^{2f_0}+g,\ g\in s_t^\infty\CI(\Mm).
$$ 
Then a solution $f=f_0+\tilde f$ to \eqref{MetLef.9} amounts to solving
$$
\tilde f =-(\Lap+2)^{-1}\left(2\tilde f(e^{2f_0}-1) + e^{2f_0}(e^{2\tilde
  f}-1-2\tilde f)-g\right)=K(\tilde f).
$$
Here the non-linear operator $K$ is at least quadratic in $\tilde f$ and
the boundedness of $(\Lap+2)^{-1}$ on $\rho_{\II}^{-\ha}H^M_{\bo}(\Mm)$ for
all $M$ allow the Inverse Function Theorem to be applied to show that
$\tilde f \in s_t^\infty\CI(\Mm)$ and hence that $f$ itself is log-smooth.

In \S\ref{resP} the model space and metric are analysed and in \S\ref{Glob}
the global resolution is described and the proof of the Theorem above is
outlined. The linearized model involves the inverse of $\Lap+2$ for the
Laplacian on the fibers and the uniform behavior, at the singular fibers,
of this operator is explained in \S\ref{Bounds}. The solution of the
curvature problem in formal power series is discussed in \S\ref{Formal} and
using this the regularity of the fiber metric is shown in \S\ref{CurvatureEqn}.

In \cite{Mazzeo2013} Rafe Mazzeo mentions joint work with Swoboda which
is closely related to the expansion for the metric discussed here. Our
interest in the behavior of the fiber metrics was stimulated by the
possibility, arising in discussion with Michael Singer, of extending the
work of Fine~\cite{fine2004constant}, to the Lefschetz case.

\paperbody
\section{The plumbing model}\label{resP}

We start with a description of the real resolution of the plumbing variety,
given in \eqref{MetResMod.3}, and the properties of the fiber metric,
\eqref{MetResMod.4}, on the resolved space. As noted above there are three
steps in this resoluton, first the fiber complex structure is resolved, in
a real sense, then two further steps are required to resolve the fiber
metric.

The plumbing variety itself is smooth with $z$ and $w$ global complex
coordinates -- it is the model singular fibration $\phi$ which is to be
`resolved' in the real sense. The fibers above each $t\not=0$ are annuli
\begin{equation}
\xymatrix{
\{|t|\le|z|\le\tq\}\ar[r]^{w=t/z}&\{|t|\le|w|\le\tq\}
}
\label{MetResMod.7}\end{equation}
whereas the singular fiber above $t=0$ is the union of the two discs at
$z=0$ and $w=0$ identified at their origins 
\begin{equation}
\phi^{-1}(0)=\{|z|\le\tq\}\cup\{|w|\le\tq\}/(\{z=0\}\sim\{w=0\}).
\label{MetResMod.8}\end{equation}
Note that the differential of $\phi$ vanishes at the singular point $z=w=0$
so any smooth vector field on the range which lifts under it, i.e.\ is
$\phi$-related to a smooth vector field on $P,$ vanishes at $t=0.$ Conversely,
$t\pa_t$ is $\phi$-related to both $z\pa_z$ and $w\pa_w$ whereas the vector
field
\begin{equation} V=z\pa_z-w\pa_w
\label{MetResMod.23}\end{equation}
annihilates $\phi^*t$ and so is everywhere tangent to the fibers of $\phi.$

The first step in the resolution of $\phi:P\longrightarrow \bbD_{\ha}$
consists in passing to the commutative square 
\begin{equation}
\xymatrix{
P_{\dbar}\ar[r]^-{\phi_{\dbar}}\ar[d]&[\bbD_{\ha},0]\ar[d]\\
P\ar[r]_{\phi}&\bbD_{\ha}.
}
\label{MetResMod.9}\end{equation}
Here $[\bbD_{\ha},0]$ is the space obtained by real blow up of the origin in
the disk, which can be realized globally as 
\begin{equation}
[\bbD_{\ha},0]\simeq[0,\ha]\times\bbS
\ni(r,\theta)\longmapsto t=re^{i\theta}\in\bbD_{\ha}
\label{MetResMod.10}\end{equation}
if $\bbS=\bbR/2\pi\bbZ.$ As a real blow-up $[\bbD_{\ha},0]$ is a
well-defined manifold with boundary and any diffeomorphism of $\bbD_{\ha}$
fixing the origin lifts to a global diffeomorphism. The complex structure
on $\bbD_{\ha}$ lifts to a complex structure on $\bT[\bbD_{\ha},0]$
generated by $t\pa_t=r\pa_r+i\pa_\theta$ in terms of \eqref{MetResMod.10}.

\begin{proposition}\label{MetResMod.11} The space 
\begin{equation}
P_{\dbar}=[P;\{z=0\}\cup\{w=0\}],
\label{MetResMod.12}\end{equation}
obtained by the real blow-up of the two normally-intersecting divisors
forming the singular fiber of $\phi,$ gives a commutative diagram
\eqref{MetResMod.9} in which $\phi_{\dbar}$ is a b-fibration with 
\begin{equation}
\phi_{\dbar}^*\cI_{\dbar}=\cI_{\IL}\cI_{\IR}
\label{MetResMod.13}\end{equation}
where $\cI_{\IL}$ and $\cI_{\IR}$ correspond to the two boundary components
introduced by the blow-up, forming the proper tranforms of $z=0$ and $w=0$
respectively.
\end{proposition}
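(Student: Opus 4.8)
The plan is to work entirely in local coordinates near the singular point $z=w=0$ of $P$, since away from that point $\phi$ is already a submersion of manifolds with corners (with the annular fibers \eqref{MetResMod.7}) and is trivially a b-fibration there, with neither of the divisors $\{z=0\}$, $\{w=0\}$ meeting that region. So fix the polydisc $\{|z|\le\tq,\ |w|\le\tq\}$ in which $P$ sits as the hypersurface $zw=t$, and introduce real polar coordinates in each factor: $z=\sigma e^{i\alpha}$, $w=\tau e^{i\beta}$ with $\sigma,\tau\in[0,\tq]$. The blow-up $P_{\dbar}=[P;\{z=0\}\cup\{w=0\}]$ replaces the two coordinate divisors by their front faces; since $\{z=0\}$ and $\{w=0\}$ meet normally inside the smooth surface $\{zw=t\}$ (note $P$ is smooth, $z,w$ being global coordinates on it), the iterated real blow-up is well-defined and the order of the two blow-ups is immaterial. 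In the resulting manifold with corners, $\sigma$ and $\tau$ serve as (local) boundary defining functions for the two new hypersurfaces, which I call $B_{\IL}$ (over $\{z=0\}$, i.e. $\sigma=0$) and $B_{\IR}$ (over $\{w=0\}$, i.e. $\tau=0$); the associated ideals are $\cI_{\IL}=\sigma\CI$, $\cI_{\IR}=\tau\CI$. On the base, $[\bbD_{\ha},0]\cong[0,\ha]_r\times\bbS_\theta$ as in \eqref{MetResMod.10}, with the single boundary $r=0$ and ideal $\cI_{\dbar}=r\CI$.

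Next I would check that $\phi$ lifts to a smooth map $\phi_{\dbar}$ completing the square \eqref{MetResMod.9}, and compute the pullback of $\cI_{\dbar}$. Since $t=zw$, in the polar coordinates $r=|t|=\sigma\tau$ and $\theta=\alpha+\beta$; both are smooth on $P_{\dbar}$, and the map is smooth (one should note the angular coordinate: $\alpha$ is only defined away from $\sigma=0$ and $\beta$ away from $\tau=0$, but $\theta=\alpha+\beta$ extends smoothly across each of these faces individually, and where both vanish one uses the product $\bbS$-coordinate carefully — this is exactly the place the blow-up is doing its job). The key computation is then immediate: $\phi_{\dbar}^*\cI_{\dbar}=\phi_{\dbar}^*(r\CI)=(\sigma\tau)\CI=\cI_{\IL}\cdot\cI_{\IR}$, which is \eqref{MetResMod.13}. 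In particular the pullback is a nontrivial product of the two boundary ideals, so $\phi_{\dbar}$ is an interior b-map, and since the range is a manifold with boundary (a single boundary hypersurface) there is no common factor to worry about, so it is b-normal.

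It remains to verify the b-fibration condition: every vector field on $[\bbD_{\ha},0]$ tangent to its boundary is $\phi_{\dbar}$-related to one on $P_{\dbar}$ tangent to all boundaries. The Lie algebra of b-vector fields on the base is spanned over $\CI$ by $r\pa_r$ and $\pa_\theta$. I would exhibit explicit lifts: $\pa_\theta$ lifts to $\tfrac12(\pa_\alpha+\pa_\beta)$ (or any vector field projecting to $\pa_\theta$ that is tangent to the faces — $\pa_\alpha$ and $\pa_\beta$ are themselves tangent to all boundaries), and $r\pa_r=t\pa_t$ lifts: as noted in the excerpt just before the proposition, $t\pa_t$ is $\phi$-related to $\sigma\pa_\sigma$ (the real part of $z\pa_z$) and also to $\tau\pa_\tau$, and $\sigma\pa_\sigma$, $\tau\pa_\tau$ are tangent to all boundary faces of $P_{\dbar}$. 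Conversely, a b-vector field on $P_{\dbar}$ is $\phi_{\dbar}$-related to \emph{some} vector field on the base, because $\phi_{\dbar}$ is a submersion away from the corner $B_{\IL}\cap B_{\IR}$ and the Jacobian computation $r\pa_r = \sigma\pa_\sigma\cdot(\text{something}) $ shows that the images of $\sigma\pa_\sigma$, $\tau\pa_\tau$, $\pa_\alpha$, $\pa_\beta$ under $d\phi_{\dbar}$ all land in the span of the base b-fields with smooth coefficients. This gives surjectivity of $\phi_{\dbar}$ as well. The one point requiring genuine care — the main obstacle — is handling the angular coordinate at the corner $\sigma=\tau=0$: one must confirm that $\theta=\alpha+\beta$ together with the radial variables really does give smooth local coordinates on $P_{\dbar}$ there, equivalently that the iterated blow-up of the two normally intersecting circles-bundles is carried out correctly; once that chart is nailed down, every assertion above is a direct coordinate verification.
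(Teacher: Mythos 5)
Your argument is correct and follows essentially the same route as the paper: introduce polar coordinates via the real blow-up, observe $r_t=r_zr_w$ and $e^{i\theta_t}=e^{i\theta_z}e^{i\theta_w}$, and lift the generating b-vector fields $r\pa_r$ and $\pa_\theta$ to $r_z\pa_{r_z}$ (or $r_w\pa_{r_w}$) and $\pa_{\theta_z}+\pa_{\theta_w}$. The only differences are cosmetic: you spell out the lifting condition explicitly and add an extraneous ``converse'' direction (not required by the definition of b-fibration used here), whereas the paper simply records the multiplicative behaviour of the radial and angular variables and notes the global product structure \eqref{MetResMod.24}, which already settles your worry about smooth coordinates at the corner.
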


\begin{proof} The two divisors forming the singular fiber $\phi^{-1}(0)$
  are each contained in a product product neighborhood
  $\bbD_{\ha}\times\bbD_{\tq}\subset P$ and 
  $\bbD_{\tq}\times\bbD_{\ha}\subset P.$ The transversality of their
  intersection is clear and it follows that the blow-up is well-defined
  independently of order with the new front faces being
\begin{equation}
B_{\IL}=\bbS\times[\bbD_{\tq},\{0\}]\subset
P_{\dbar},\ B_{\IR}=[\bbD_{\tq},\{0\}]\times\bbS\subset P_{\dbar}.
\label{MetResMod.15}\end{equation}
Here each of the blown up disks corresponds to the introduction of polar
coordinates, so $r_z=|z|$ is a defining function (globally) for $B_{\IL}$ and
$r_w=|w|$ for $B_{\IR}.$ Since $r_t=|t|$ is a defining function for the blown-up
disk in the range and 
\begin{equation}
r_t=r_zr_w
\label{MetResMod.21}\end{equation}
the b-fibration condition follows from the behaviour of the corresponding
angular variables
\begin{equation}
e^{i\theta_t}=e^{i\theta_z}e^{i\theta_w}.
\label{MetResMod.20}\end{equation}

As a compact manifold with corners, $P_{\dbar}$ is globally the product of an
embedded manifold in $\bbR^2$ and a 2-torus
\begin{equation}
P_{\dbar}=\{(r_z,r_w);0\le
r_z,r_w\le\tq,\ r_zr_w\le\ha\}\times\bbS_z\times\bbS_w.
\label{MetResMod.24}\end{equation}
\end{proof}

This first step in the resolution resolves the complex structure
in a real sense. In particular the vector fields tangent to the fibers of
$\phi_{\dbar}$ and to the boundaries form all the sections of a subbundle
of $\bT P_{\dbar}$ which has a complex structure, spanned by the lift of
the single vector field \eqref{MetResMod.23}.

Although the complex structure is effectively resolved, the plumbing metric
in \eqref{MetResMod.4} is not. That $g_P$ has curvature $-1$ on the fibers,
away from the singular point, can be seen by changing variables to $s=\log
r,$ $r=r_z$ and $\theta=\theta _z$ in terms of which
$$
g_P=(\frac{\pi/\log|t|}{\sin(\pi s/\log |t|)})^2(ds^2+d\theta^2).
$$
It then follows from the standard formula for the Gauss curvature that
$$
R=-\frac{1}{2\sqrt{fg}}(\pa_r (\frac{\pa_r g}{\sqrt{fg}})+\pa_\theta
(\frac{\pa_\theta f}{\sqrt{fg}}))=-1.
$$

In view of the coefficients in $g_P$ it is natural to introduce the inverted
logarithms of the new boundary defining functions, so replacing the
radial by the logarithmic blow-up. Thus
\begin{equation}
s_z=\ilog{r_z}=\frac1{\log\frac1{r_z}},\ s_w=\ilog{r_w}
\label{MetResMod.25}\end{equation}
become new boundary defining functions in place of $r_z$ and $r_w.$ The
space with this new \ci\ structure can be written
\begin{equation}
[P;\{z=0\}_{\log}\cup\{w=0\}_{\log}].
\label{MetResMod.26}\end{equation}

However, even after this second step, the fiber metric does not have smooth
coefficients:
$$
g_P=\frac{\pi^2s_t^2}{\sin^2(\frac{\pi s_t}{s_w})}
\left(\frac{ds_w^2}{s_w^4}+d\theta_w^2\right). 
$$
Indeed $s_t=\frac{s_zs_w}{s_z+s_w}$ is not a smooth function on the space
\eqref{MetResMod.26}.

The final part of the metric resolution is to blow up, radially, the corner
formed by the intersection of the two logarithmic boundary faces
\begin{equation}
P_{\mr}=[[P;\{z=0\}_{\log}\cup\{w=0\}_{\log}];\{s_z=s_w=0\}].
\label{MetResMod.27}\end{equation}
In terms of the presentation \eqref{MetResMod.24} this preserves
the torus factor and replaces the 2-manifold with corners by a new
one with more smooth functions and an extra boundary hypersurface.

\begin{proposition} The model Lefschetz fibration $\phi$ lifts to a
  b-fibration $\phi_{\mr}$ giving a commutative diagram
\begin{equation}
\xymatrix{
P_{\mr}\ar[r]^-{\phi_{\mr}}\ar[d]&[\bbD_{\ha};\{0\}_{\log}]\ar[d]\\
P\ar[r]_{\phi}&\bbD_{\ha}.
}
\label{MetResMod.28}
\end{equation}
\end{proposition}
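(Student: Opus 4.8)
The plan is to verify that $\phi_{\mr}$, the lift of $\phi$ to the iterated blow-up $P_{\mr}$ of \eqref{MetResMod.27}, maps to the logarithmic blow-up $[\bbD_{\ha};\{0\}_{\log}]$ and does so by a b-fibration. Proposition~\ref{MetResMod.11} already gives the b-fibration $\phi_{\dbar}:P_{\dbar}\longrightarrow[\bbD_{\ha},0]$ with $\phi_{\dbar}^*\cI_{\dbar}=\cI_{\IL}\cI_{\IR}$, so the issue is purely about how this behaves under the two further modifications: replacing the \ci\ structure by the logarithmic one (passing from $P_{\dbar}$ to \eqref{MetResMod.26} and from $[\bbD_{\ha},0]$ to $[\bbD_{\ha};\{0\}_{\log}]$), and then blowing up the corner $\{s_z=s_w=0\}$.

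First I would treat the logarithmic weakening. On the base, $r_t$ is replaced by $s_t=\ilog r_t$; on the total space $r_z,r_w$ are replaced by $s_z=\ilog r_z$, $s_w=\ilog r_w$ as in \eqref{MetResMod.25}. From $r_t=r_zr_w$ one gets $\frac1{s_t}=\frac1{s_z}+\frac1{s_w}$, i.e.\ $s_t=\frac{s_zs_w}{s_z+s_w}$. The key point is that although $s_t$ is not smooth on \eqref{MetResMod.26}, it becomes smooth after blowing up $\{s_z=s_w=0\}$: introducing projective coordinates near the new front face, say $\sigma=s_z/s_w$ on one chart with $s_w$ the defining function for the old face $B_{\IR,\log}$, one has $s_t=s_w\cdot\frac{\sigma}{1+\sigma}$, which is a smooth (indeed polyhomogeneous with the claimed integral/log structure) multiple of $s_w$; the new front face of the corner blow-up is where $s_w\to0$ with $\sigma$ finite and $s_t$ vanishes there to first order in the total boundary defining function. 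Symmetrically on the other chart. Thus on $P_{\mr}$ the pullback $\phi_{\mr}^*\cI_{\log}$ of the boundary ideal of $[\bbD_{\ha};\{0\}_{\log}]$ — which is generated by $s_t$ — is the product of the ideals of the boundary hypersurfaces of $P_{\mr}$ lying over $t=0$, each appearing to multiplicity one. This is precisely the b-map and b-normality condition (b-normality being automatic since the range is a manifold with boundary, as remarked in the Introduction).

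Next I would check the b-fibration (lifting of vector fields) condition. By the Introduction's criterion it suffices that every vector field on $[\bbD_{\ha};\{0\}_{\log}]$ tangent to the boundary lift to one on $P_{\mr}$ tangent to all boundaries. Such vector fields on the base are spanned over $\CI$ by $s_t\pa_{s_t}$ and $\pa_{\theta_t}$. Since $\phi_{\dbar}$ was already a b-fibration, $r_t\pa_{r_t}$ lifts, and $r_t\pa_{r_t}=-s_t^{-1}\cdot(\text{something})$ — more precisely the logarithmic change of variable converts $r\pa_r$ into $s\pa_s$ up to the smooth nonvanishing factor coming from $\frac{d s}{s}= \frac{dr}{r\log(1/r)}$, so $s_t\pa_{s_t}$ is a smooth multiple of the lift of $r_t\pa_{r_t}$, hence lifts; likewise $s_z\pa_{s_z}$, $s_w\pa_{s_w}$ are the logarithmic avatars of $r_z\pa_{r_z}$, $r_w\pa_{r_w}$ and span (together with $\pa_{\theta_z},\pa_{\theta_w}$) the b-vector fields on \eqref{MetResMod.26}. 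It then remains to see that the corner blow-up of $\{s_z=s_w=0\}$ does not destroy this: a radial blow-up of a boundary face is always a b-fibration onto the original space, b-vector fields lift to b-vector fields, and the composite of b-fibrations is a b-fibration. So $\phi_{\mr}$, being (up to the logarithmic isomorphisms of \ci\ structure, which are diffeomorphisms of the underlying topological manifolds) the composite $P_{\mr}\to[P;\{z=0\}_{\log}\cup\{w=0\}_{\log}]\to P_{\dbar}\xrightarrow{\phi_{\dbar}}[\bbD_{\ha},0]\cong[\bbD_{\ha};\{0\}_{\log}]$, is a b-fibration, and the commutative square \eqref{MetResMod.28} follows from the corresponding square for $\phi_{\dbar}$ by functoriality of blow-up with respect to the $\cI_{\IL},\cI_{\IR}$ decomposition.

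The main obstacle is the interaction of the logarithmic change of \ci\ structure with the final corner blow-up: one must check carefully that in the projective coordinates on $P_{\mr}$ the function $s_t$ really is a \emph{smooth} boundary-defining-type factor (with the correct polyhomogeneous structure) and that the two charts patch — equivalently, that $[\bbD_{\ha};\{0\}_{\log}]$ is genuinely the correct target, i.e.\ blowing up the corner in the total space corresponds on the base to the logarithmic blow-up and nothing more. Everything else is bookkeeping with the known functoriality of blow-ups and the composition of b-fibrations, once Proposition~\ref{MetResMod.11} is in hand.
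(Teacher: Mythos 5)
Your first computation is correct and is essentially the paper's own argument: you show that after the corner blow-up $s_t=\frac{s_zs_w}{s_z+s_w}$ becomes a smooth nonvanishing multiple of a product of boundary defining functions, one for each of the three hypersurfaces over $t=0$, each to multiplicity one. The only cosmetic difference is that you work in the two projective charts ($\sigma=s_z/s_w$, giving $s_t=s_w\,\sigma/(1+\sigma)$), whereas the paper uses the single global formula $s_t=\frac{R_zRR_w}{R_z+R_w}$ with $R=(s_z^2+s_w^2)^{1/2}$, $R_z=s_z/R$, $R_w=s_w/R$, noting the denominator cannot vanish because $R_z$ and $R_w$ have disjoint zero sets. (A small mislabelling: in your chart $s_w$ defines the \emph{new front face} of the corner blow-up, not the lift of the old face; your later sentence gets this right.)

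Your second paragraph, however, does not hold up. First, $s=\ilog r$ gives $r\pa_r=s^2\pa_s$, so $s_t\pa_{s_t}=s_t^{-1}\,(r_t\pa_{r_t})$ is a \emph{singular}, not smooth, multiple of the lift of $r_t\pa_{r_t}$; the inference ``hence lifts'' is backwards. Second, the blow-down map of a corner is \emph{not} a b-fibration: it pulls the two old boundary ideals back to $\cI_{\ff}\cI_{\tilde H_1}$ and $\cI_{\ff}\cI_{\tilde H_2}$, which share the common factor $\cI_{\ff}$, so b-normality fails; and the logarithmic reparametrizations are homeomorphisms but not diffeomorphisms (indeed not b-maps, since $r_z=e^{-1/s_z}$ vanishes to infinite order). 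So the claim that $\phi_{\mr}$ is a composite of b-fibrations is false, and the exponent bookkeeping cannot be pushed through the factors --- the cancellation of the denominator $s_z+s_w$ against one power of the front-face defining function is exactly what the direct computation is for. Fortunately the repair is already contained in your first paragraph: since the base has a single boundary hypersurface, b-normality is automatic, and from $s_t=s_w\,\sigma/(1+\sigma)$ and $\theta_t=\theta_z+\theta_w$ one reads off that $s_w\pa_{s_w}$ and $\pa_{\theta_z}$ are b-vector fields on $P_{\mr}$ that are $\phi_{\mr}$-related to $s_t\pa_{s_t}$ and $\pa_{\theta_t}$ in that chart (symmetrically in the other chart), which is the paper's lifting criterion for a b-fibration. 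You should replace the composition argument by this direct verification.
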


\begin{proof} The radial variables on the spaces $P_{\dbar}$ and
$[\bbD_{\ha},\{0\}]$ are related by
\begin{equation}
|t|=|z||w|\Longrightarrow s_t=\frac{s_zs_w}{s_z+s_w},\ s_t=\ilog{|t|}
\end{equation}
so $\phi$ does not lift to be smooth. However, consider the further
introduction of the radial variable $R=(s_z^2+s_w^2)^{\ha}$ and the smooth
defining functions $R_z=s_z/R,$ $R_w=s_w/R$ for the lifts of the two
boundary hypersurfaces. Then
\begin{equation}
s_t=\frac{R_zRR_w}{R_z+R_w}
\end{equation}
which is smooth since $R_z$ and $R_w$ have disjoint zero sets. It follows
that $\phi$ lifts to a b-fibration as in \eqref{MetResMod.28} under which
the boundary ideal lifts to the product of the three ideals
\begin{equation}
\phi_{\mr}^*\cI_{s_t}=\cI_{R_z}\cI_R\cI_{R_w}.
\label{MetResMod.29}\end{equation}
\end{proof}

The generator $V,$ in \eqref{MetResMod.23}, of the fiber
tangent space of $\phi$ lifts to $P_{\dbar}$ as
$$
V=r_z\pa_{r_z} - r_w\pa_{r_w}- i\pa_{\theta_z} +i \pa_{\theta_w}
$$
in terms of the coordinates in \eqref{MetResMod.20} and
\eqref{MetResMod.21}. Under the introduction of the logarithmic
variables in \eqref{MetResMod.25} it further lifts to 
\begin{equation*}
V=s_z^2 \pa_{s_z} -s_w^2\pa_{s_w} - i\pa_{\theta_z} +i \pa_{\theta_w}.
\label{MetLef.25}\end{equation*}
In a neighborhood of the the lift of the face $s_z=0$ to $P_{\mr}$ the
variables $s_w$ (defining the new front face) and $\rho_z=s_z/s_w\in[0,\infty)$
(defining the lift of $s_z=0)$ are valid and
\begin{equation}
V=-s_w(s_w\pa_{s_w}-\rho_z \pa_{\rho_z}-\rho_z^2 \pa_{\rho_z})- i\pa_{\theta_z}
+i \pa_{\theta_w}.
\label{MetResMod.31}\end{equation}

Reviewing the three steps in the construction of $P_{\mr},$ notice that
the two holomorphic defining functions $z$ and $w$ are well-defined up to
constant multiples and addition of (holomorphic) terms $O(|z|^2)$ and
$O(|w|^2)$ respectively. Under these two changes, the logarithmic variables
$s_z$ change to $s_z+s_z^2G$ with $G\in\CI(P_{\mr})$ smooth. The same is
true of $s_w$ so it follows that the radial variable 
\begin{equation}
R=(s_z^2+s_w^2)^{1/2}\in\CI(P_{\mr}),
\label{MetLef.10}\end{equation}
which defines the front face, is also uniquely defined up to an additive term
vanishing quadratically there. This determines a `cusp' structure at
$B_{\II}$ and from \eqref{MetResMod.31} we conclude that

\begin{lemma}\label{MetLef.11} The vector field $R^{-1}V$ on $P_{\mr}$
  spans a smooth complex line bundle, $\LT P_{\mr}$ over $P_{\mr}$ with
  underlying real plane bundle having smooth sections precisely of the form
  $R^{-1}W$ where $W$ is a smooth vector field tangent to the boundaries,
  to the fibers of $\phi_{\mr}$ and satisfying $WR=O(R^2)$ at $R=0.$ 
\end{lemma}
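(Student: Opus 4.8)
The plan is to work in the explicit local coordinates on $P_{\mr}$ already exhibited near each of the three boundary hypersurfaces and to check the two claimed facts directly: first that $R^{-1}V$ is a nonvanishing section of a genuine smooth complex line bundle, and second that its underlying real plane bundle is exactly $R^{-1}\{W : W \text{ tangent to the boundaries and to the fibers of }\phi_{\mr},\ WR = O(R^2)\}$. The complex structure was already noted to be present at the level of $P_{\dbar}$, spanned by the lift of $V = z\pa_z - w\pa_w$; since $R^{-1}$ is a positive real function away from $R=0$ and $R \in \CI(P_{\mr})$ is a well-defined boundary defining function for $B_{\II}$, multiplication by $R^{-1}$ simply rescales the fiber-tangent line bundle of $\phi_{\mr}$ and carries its complex structure along. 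So the content is really about the behavior at $R=0$.

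First I would verify smoothness of $R^{-1}V$ across $B_{\II}$. From \eqref{MetResMod.31}, near the lift of $s_z=0$ we have $V = -s_w(s_w\pa_{s_w} - \rho_z\pa_{\rho_z} - \rho_z^2\pa_{\rho_z}) - i\pa_{\theta_z} + i\pa_{\theta_w}$, and in this chart $R$ is comparable to $s_w$ (indeed $R = s_w(1+\rho_z^2)^{1/2}$), so $R^{-1}V$ has the schematic form $-(1+\rho_z^2)^{-1/2}(s_w\pa_{s_w} - (\rho_z+\rho_z^2)\pa_{\rho_z}) - (1+\rho_z^2)^{-1/2}s_w^{-1}(i\pa_{\theta_z} - i\pa_{\theta_w})$. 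The apparent $s_w^{-1}$ singularity in the angular part is exactly what the passage to $\LT$ is designed to absorb: $s_w^{-1}(i\pa_{\theta_z} - i\pa_{\theta_w})$ is smooth and nonvanishing once one uses the correct trivialization of the fiber bundle $\LT P_{\mr}$, i.e. $\LT$ is not a subbundle of $\bT P_{\mr}$ but the rescaled bundle whose sections near $B_{\II}$ are $R^{-1}$ times vector fields satisfying the stated conditions. On the interior faces $B_{\IL}, B_{\IR}$ (where $R$ does not vanish) smoothness is immediate from the first-step analysis. Patching the two charts around $B_{\II}$ (and their $z \leftrightarrow w$ symmetric counterparts) using the overlap where both $\rho_z, \rho_w$ are bounded away from $0$ and $\infty$ gives a globally defined smooth nonvanishing section, hence a smooth complex line bundle.

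Next I would identify the underlying real plane bundle. One inclusion is essentially a computation: $V$ itself is tangent to all boundary faces of $P_{\dbar}$ and to the fibers of $\phi_{\dbar}$ (it annihilates $\phi^*t$), and these properties are preserved under the logarithmic reparametrization and the final blow-up, so $V$ is tangent to the boundaries and fibers of $\phi_{\mr}$; the condition $VR = O(R^2)$ follows from the remark preceding the Lemma, namely that $R$ is determined up to an additive term vanishing quadratically at $B_{\II}$ together with the explicit form \eqref{MetResMod.31} — a direct check shows $V s_w$ and $V\rho_z$ produce no $O(R)$ contribution to $VR$ after the cancellations. Thus $R^{-1}V$ (and $R^{-1}$ times its complex rotate, $R^{-1}iV$, which has the same tangency and cusp properties) lies in the claimed space, and since the claimed space is closed under multiplication by $\CI(P_{\mr})$, it contains the whole line bundle spanned by $R^{-1}V$. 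For the reverse inclusion I would argue by dimension count: near each boundary point, the vector fields $W$ tangent to the boundaries, to the fibers, and with $WR = O(R^2)$ form a $\CI(P_{\mr})$-module; I would show this module is locally free of rank two over $\CI$ by exhibiting a local frame adapted to the coordinates — away from $B_{\II}$ the fiber condition alone cuts $\bT$ down to the rank-two fiber-tangent bundle already described, and near $B_{\II}$ one writes a general boundary-and-fiber-tangent $W$ in the $(s_w,\rho_z,\theta_z,\theta_w)$ chart, imposes $W\phi_{\mr}^*t=0$ and $WR=O(R^2)$, and checks that the solution space is spanned over $\CI$ by (the real and imaginary parts of) $s_w^{-1}$ times the angular vector field, i.e. by $RV$ up to smooth rescaling — so $R^{-1}W$ always lands in the line spanned by $R^{-1}V$.

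The main obstacle I anticipate is the bookkeeping at $B_{\II}$: one must keep straight three different \ci\ structures (radial, logarithmic, and post-blow-up), confirm that $WR = O(R^2)$ is independent of the residual choice in $R$ (this is where the remark about $z,w$ being defined up to $O(|z|^2), O(|w|^2)$ — hence $s_z \mapsto s_z + s_z^2 G$ — does the work), and verify that the angular combination $i\pa_{\theta_z} - i\pa_{\theta_w}$ really is the unique "extra" direction forced by the cusp condition rather than $i\pa_{\theta_z} + i\pa_{\theta_w}$, which is $\phi_{\mr}$-related to the base and so excluded by fiber-tangency. Once the local frame near $B_{\II}$ is pinned down, the global statement follows by the symmetry $z \leftrightarrow w$ and the fact that $R$ is globally well-defined.
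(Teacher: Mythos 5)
Your argument is correct and follows essentially the same route as the paper, which gives no separate proof of this lemma but deduces it directly from the coordinate expression \eqref{MetResMod.31} for the lifted $V$ together with the observation that $R$ in \eqref{MetLef.10} is well defined up to terms vanishing quadratically at $B_{\II}$; your write-up simply makes explicit the local-frame verification that the paper leaves implicit. One slip to correct in the reverse inclusion: imposing tangency to the boundaries and fibers together with $WR=O(R^2)$ shows the module of admissible $W$ is spanned over $\CI(P_{\mr})$ by the real and imaginary parts of $V$ itself, namely $-s_w\bigl(s_w\pa_{s_w}-(\rho_z+\rho_z^2)\pa_{\rho_z}\bigr)$ and $-\pa_{\theta_z}+\pa_{\theta_w}$ — not by ``$s_w^{-1}$ times the angular field'' nor by ``$RV$'' as you write, either of which would give the wrong rescaling — and dividing this frame by $R$ then yields exactly the real plane bundle underlying the span of $R^{-1}V$.
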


It is natural to consider this bundle, precisely because

\begin{lemma}\label{MetLef.12} The plumbing metric defines an Hermitian
  metric on $\LT P_{\mr}.$
\end{lemma}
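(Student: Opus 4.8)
The plan is to verify directly, in the coordinates constructed above for $P_{\mr}$, that the plumbing metric $g_P$ is a smooth positive multiple of the flat Hermitian metric $R^{-2}|V^\flat|^2$ determined by the generator $R^{-1}V$ of $\LT P_{\mr}$ from Lemma \ref{MetLef.11}. Recall from \eqref{MetResMod.31} that near the lift of $s_z=0$ the fiber vector field is $V=-s_w(s_w\pa_{s_w}-\rho_z\pa_{\rho_z}-\rho_z^2\pa_{\rho_z})-i\pa_{\theta_z}+i\pa_{\theta_w}$, with $\rho_z=s_z/s_w$ a defining function for the lift of $\{s_z=0\}$ and $s_w$ a defining function for the new front face $B_{\II}$; also $s_t=R_zRR_w/(R_z+R_w)$ is smooth on $P_{\mr}$ by the previous proposition. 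First I would rewrite the metric \eqref{MetResMod.4} in terms of the logarithmic radial variable: using $s=\log r_z$ one has $ds=-s_z^2\,d(1/s_z)/(\cdots)$, more simply $ds = -ds_z/s_z^2$ wait — rather, since $s_z=1/\log(1/r_z)$ we get $d(\log r_z)=ds_z/s_z^{2}$ up to sign, so that $ds_0^2=(d(\log|z|))^2+d\theta_z^2=s_z^{-4}\,ds_z^2+d\theta_z^2$, and hence
\begin{equation*}
g_P=\Big(\frac{\pi s_t}{\sin(\pi s_t/s_z)}\Big)^2\Big(\frac{ds_z^2}{s_z^4}+d\theta_z^2\Big)
=\Big(\frac{\pi s_t}{\sin(\pi s_t/s_z)}\Big)^2\,|\,s_z^{-2}\,ds_z-i\,d\theta_z\,|^2.
\end{equation*}

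Second, I would identify the one-form $s_z^{-2}ds_z-i\,d\theta_z$, which pairs with the lift of $V$ (in the form $s_z^2\pa_{s_z}-s_w^2\pa_{s_w}-i\pa_{\theta_z}+i\pa_{\theta_w}$ on the logarithmic space) to give a nonvanishing constant, as precisely $|V|^{-2}$ times the dual coframe to $V$; equivalently, $g_P$ is the pullback under the fiber projection of a metric of the form $\lambda\cdot g_{\LT}$ where $g_{\LT}$ is the fixed Hermitian metric on $\LT P_{\mr}$ and $\lambda$ is the conformal factor $(\pi s_t/\sin(\pi s_t/s_z))^2\cdot(\text{transition Jacobian})$. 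The key computation is then to check that $\lambda$ is a smooth, strictly positive function on $P_{\mr}$ up to and including both boundary hypersurfaces. Near $B_{\II}$ one uses $s_t/s_z=R_w/(R_z+R_w)\cdot$(smooth), which lies strictly between $0$ and $1$ in the interior of $B_{\IL}\cap B_{\II}$ region and extends smoothly, so $\pi s_t/\sin(\pi s_t/s_z)$ is smooth and nonzero there — note $\sin(\pi s_t/s_z)\neq 0$ because the argument stays in $(0,\pi)$. Near the lift of $\{s_z=0\}$ away from $B_{\II}$, where $s_t/s_z=\rho_z\cdot(\text{smooth, nonvanishing})\to 0$, one uses $\pi s_t/\sin(\pi s_t/s_z)\to s_z$, i.e. the factor degenerates exactly like $s_z^2$ after squaring, which is precisely cancelled when one rewrites the coframe in the valid coordinates $(s_w,\rho_z)$ near that face via $s_z=\rho_z s_w$; this is where the $R^{-1}$ twist in the definition of $\LT P_{\mr}$ does its job.

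The main obstacle I anticipate is the bookkeeping at the front face $B_{\II}$ and at the corner $B_{\IL}\cap B_{\II}$: one must confirm both that $s_t/s_z$ (and symmetrically $s_t/s_w$) extends to a smooth function with values in the open interval where $\csc$ is finite and nonzero, so that no spurious zero or pole of the conformal factor appears, and that the apparent coordinate singularity of the coframe $s_z^{-2}ds_z-i\,d\theta_z$ at $\{s_z=0\}$ is exactly the one absorbed into the line bundle $\LT P_{\mr}$ — i.e. that $R^{-1}V$ rather than $V$ is the correct trivializing section. Once the symmetry under $z\leftrightarrow w$ is used to treat the face $\{s_w=0\}$ identically, and the two local expressions are checked to patch over the interior of $B_{\II}$ (where neither $\rho_z$ nor $\rho_w$ vanishes and $R$ is a genuine defining function), the positivity and smoothness of $\lambda\in\CI(P_{\mr})$ follow, giving the claimed Hermitian metric on $\LT P_{\mr}$. $\qed$
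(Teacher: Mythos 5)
Your overall strategy coincides with the paper's: rewrite $g_P$ in the logarithmic coordinates, read off the conformal factor $\pi^2s_t^2/\sin^2(\pi s_t/s_z)$ relative to the coframe dual to $V$, and verify that $|R^{-1}V|^2_{g_P}$ is smooth and strictly positive on all of $P_{\mr}$ (the paper does exactly this, working in the coordinates $(s_w,\rho_z,\theta_z,\theta_w)$ valid in a full neighborhood of the lift of $\{s_z=0\}$, including the corner, and concluding that $|V|^2_g$ is a smooth positive multiple of $R^2$). Your formula for $g_P$ is correct, but the asymptotics you then assert --- which are the actual content of the verification --- are not. Near the lift of $\{s_z=0\}$ one has $s_t/s_z=s_w/(s_z+s_w)=1/(1+\rho_z)\to 1$, not $\to 0$; it is $s_t/s_w=\rho_z/(1+\rho_z)$ that vanishes like $\rho_z$. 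Hence $\sin(\pi s_t/s_z)=\sin\bigl(\pi\rho_z/(1+\rho_z)\bigr)$ has a simple zero in $\rho_z$, and since $s_t/\rho_z=s_w/(1+\rho_z)$ is smooth, $\pi s_t/\sin(\pi s_t/s_z)\to s_w$, not $s_z$. So the prefactor is a smooth positive multiple of $s_w^2=R^2R_w^2$ with $R_w$ nonvanishing near that face. This distinction is substantive: a prefactor degenerating like $s_z^2$ at $B_{\IL}$, as you claim, would \emph{not} be absorbed by the $R^{-1}$ twist --- $R$ does not vanish at $B_{\I}$ away from the corner --- and would contradict the lemma. The $R^{-1}$ normalization exists to absorb the $R^2$ degeneration at $B_{\II}$ only; at $B_{\I}$ away from the corner nothing degenerates and there is nothing to cancel.

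Second, your assertion that $\sin(\pi s_t/s_z)\neq 0$ near $B_{\II}$ ``because the argument stays in $(0,\pi)$'' fails precisely at the corner $B_{\IL}\cap B_{\II}$, where $R_w/(R_z+R_w)=1$ and the sine vanishes; you name the corner as ``the main obstacle'' but do not resolve it. The paper's single computation disposes of it: in the coordinates $(s_w,\rho_z)$, valid up to and including the corner, the simple zero of $\sin(\pi/(1+\rho_z))=\sin\bigl(\pi\rho_z/(1+\rho_z)\bigr)$ at $\rho_z=0$ cancels against the factor $\rho_z$ in $s_t=\rho_zs_w/(1+\rho_z)$, leaving the coefficient a smooth positive multiple of $s_w^2$ uniformly near the whole lift of $\{s_z=0\}$. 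With these corrections (and the $z\leftrightarrow w$ symmetry you invoke) your argument closes, but as written the two key limits are false and the role of the $R^{-1}$ twist is misattributed to the wrong boundary face.
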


\begin{proof} 
On $P_{\mr},$ in a neighborhood of the lift of $\{s_z=0\}$ as discussed above,
$$
s_t=\ilog{|t|}=\frac{s_zs_w}{s_z+s_w}=\frac{\rho_z s_w}{1+\rho_z},\
\frac{\log|z|}{\log|t|}=\frac{1}{1+\rho_z}
$$
so the fiber metric lifts to
\begin{equation}
g=\frac{\pi^2s_t^2}{\sin^2(\frac{\pi s_t}{s_w})}
\left(\frac{ds_w^2}{s_w^4}+d\theta_w^2\right) 
=\frac{\pi^2s_t^2}{\sin^2(\frac{\pi}{1+\rho_z})}
\left(\frac{d\rho_z^2}{s_t^2(1+\rho_z)^4}+d\theta_z^2\right).
\label{MetResMod.30}\end{equation}
This is Hermitian and the length of $V$ relative to it is a smooth positive
multiple of $R^2.$
\end{proof}

\section{Global resolution and outline}\label{Glob}

It is now straightforward to extend the resolution of the plumbing variety
to a global resolution of any Lefschetz fibration as outlined in the
Introduction. By hypothesis, the singular fibers of a Lefschetz fibration
$\psi,$ as in \eqref{MetLef.2}, are isolated and each contains precisely
one singular point. Near the singular point the map $\psi$ is reduced to
$\phi$ by local complex diffeomorphisms. Thus each singular fiber is a connected 
compact real manifold of dimension two with a trasversal self-intersection. The
real blow-up of such a submanifold is well-defined, since it is locally
well-defined away from the self-intersection and well-defined near the
intersection in view of the transversality. Thus  
\begin{equation}
M_{\dbar}=[M,\phi^{-1}(S)]\overset{\psi_{\dbar}}\longrightarrow [Z,S]
\label{MetLef.13}\end{equation}
reduces to $\phi_{\dbar}$ near the preimage of the finite singular set
$F\subset M.$ Similarly, the logarithmic step can be extended globally
since away from the singular set it corresponds to replacing $|z|,$ by
$\ilog|z|.$ Here $z$ is a local complex defining function with
holomorphic differential along the singular fiber. Finally, the third step
is within the preimage of the set of the singular points and so is precisely
the same as for the plumbing variety.

Thus the resolved space $\Mm$ with its global b-fibration
\eqref{MetLef.3} is well-defined as is the Hermitian bundle $\LT \Mm$
which reduces to $\LT P_{\mr}$ near the singular points and is otherwise
the bundle of fiber tangents to $\Mm$ with its inherited complex structure.

To arrive at the description of the constant curvature fiber metric, as an
Hermitian metric on $\LT \Mm$ we start with the ``grafting''
construction of Obitsu and Wolpert which we interpret as giving a good initial
choice of Hermitian metric. Namely choose any smooth Hermitian metric $h_0$
on $\LT \Mm;$ from Lemma~\ref{MetLef.12} 
\begin{equation}
g_{\Pl}=e^{f_{\Pl}}h_0\text{ near }B_{\II},\ f_{\Pl}\text{ smooth.}
\label{MetLef.14}\end{equation}

Away from the singular set, near the singular fiber, $\psi$ is a fibration
in the real sense. Thus, it has a product decomposition, with the fibration
$\psi$ the projection, and this can be chosen to be consistent with the
product structure on $P$ away from the singular point. Then the complex
structure on the fibers is given by a smoothly varying tensor $J.$ The
constant curvature metric $g_0$ on the resolved singular fiber may
therefore be extended trivially to a metric on the fibers nearby, away from
the singular points. This has non-Hermitian part vanishing at the singular
fiber, so removing this gives a smooth family of Hermitian metrics reducing
to $g_0$ and so with curvature equal to $-1$ at the singular fiber. After
blow-up this remains true since the regular part of the singular fiber is
replaced by a trivial circle bundle over it. On the introduction of the
logarithmic variables in the base and total space, the curvature of this
smooth family, $g_\I,$ is constant to infinite order at the singular fiber
since it is equal to the limiting metric $g_0$ to infinite order. Comparing
$g_\I$ to the chosen Hermitian metric gives a conformal factor
$g_\I=e^{f_\I}h,\ f_{\I}\in\CI(N)$ where $N$ is a neighborhood of $B_{\I}$
excluding a neighborhood of $B_{\II}.$ Moreover, $g_{\Pl}$ is also equal to
the trivial extension of $g_0$ to second order in a compatible
trivialization so the two conformal factors 
\begin{equation}
f_{\I}=f_{\Pl}\text{ to second order}
\label{MetLef.16}\end{equation}
in their common domain of definition.

The grafting construction of Obitsu and Wolpert interpreted in this setting
is then to choose a cutoff $\chi\in\CI(\Mm)$ equal to $1$ in a
neighborhood of $B_{\II}$ and supported near it and to set 
\begin{equation}
h=e^{\chi f_{\Pl}+(1-\chi)f_{\I}}h_0.
\label{MetLef.17}\end{equation}
It follows from the discussion above that $h$ is a smooth Hermitian metric
on $\LT \Mm$ near the preimage of the singular fibers and that its curvature 
\begin{equation}
R(h)=\begin{cases}
-1\text{ near }B_{\II}\\
-1+O(s_t^2)\text{ near }B_{\I}.
\end{cases}
\label{MetLef.18}\end{equation}
We therefore use this in place of the initial choice of Hermitian metric.

Let $g$ be the unique Hermitian constant curvature metric on the regular
fibers of $\psi,$ so $g=e^{2f}h.$ The curvatures are related by 
$$
R(g)e^{2f}=\Lap_{h}f+R(h),
$$
which reduces to the curvature equation 
\begin{equation}
\Lap f+R(h)=-e^{2f},\ \Lap=\Lap_h.
\label{MetLef.19}\end{equation}

The linearization of this equation is 
\begin{equation}
(\Lap +2)f=-(R(h)+1).
\label{MetLef.20}\end{equation}
The uniform invertibility of $\Lap+2$ with respect to the metric $L^2$
norm, shown below, implies that \eqref{MetLef.19} has a unique small
solution for small values of the parameter. The proof of the Theorem in the
Introduction therefore reduces to the statement that \eqref{MetLef.19} has
a log-smooth solution vanishing at the boundary.

\section{Bounds on $(\Lap+2)^{-1}$}\label{Bounds}

In the linearization of the curvature equation \eqref{MetLef.20}, the
operator $\Lap+2,$ for the fixed initial choice of smooth fiber hermitian
metric, appears. For the Laplacian on a compact manifold, $\Lap+2$ is an
isomorphism of any Sobolev space $H^{k+1}$ to $H^{k-1},$ in particular this
is the case for the map from the Dirichlet space to its dual, corresponding
to the case $k=0.$ For a smooth family of metrics on a fibration the family
of Dirichlet spaces forms the fiber $H^1$ space and its dual the fiber
$H^{-1}$ space and $\Lap+2$ is again an isomorphism between them. These
spaces are modules over the $\CI$ functions of the total space and this, plus a simple commutation
argument, shows that in this case of a fibration $\Lap+2$ is an isomorphism
for any $k\ge1$ between the space of functions with up to $k$ derivatives,
in all directions, in the Dirichlet domain to the space with up to $k$
derivatives in the dual to the Dirichlet space. In particular it follows
from this that $\Lap+2$ is an isomorphism on functions supported away from
the boundary: 
\begin{equation}
\Lap+2:\CIc({M_{\reg}})\longleftrightarrow
\CIc(M_{\reg}),\ M_{\reg}=\Mm\setminus\pa \Mm.
\label{MetLef.33}\end{equation}
We extend this result up to the boundary of the resolved space for the
Lefschetz fibration in terms of tangential regularity.

\begin{proposition}\label{MetLef.43} For the Laplacian of the grafted metric
\begin{equation}
(\Lap+2)^{-1}:\rho _{\II}^{-\ha}H^k_{\bo}(\Mm)\longrightarrow \rho
  _{\II}^{-\ha}H^k_{\bo}(\Mm)\ \forall\ k\in\bbN.
\label{MetLef.42}\end{equation}
\end{proposition}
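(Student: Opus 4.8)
The plan is to prove the mapping property \eqref{MetLef.42} in three stages: first establish it at the level $k=0$ as a statement about a single fiber uniformly in the parameter, then propagate tangential regularity by commutation, and finally patch in the behaviour near $B_{\II}$ where the weight $\rho_{\II}^{-\ha}$ is genuinely needed. The key analytic input is that on each regular fiber, a surface with a long thin collar around a short closed geodesic, the operator $\Lap+2$ (for any fixed smooth reference Hermitian metric, in particular the grafted $h$) is invertible with a bound on $(\Lap+2)^{-1}$ from the fiber $H^{-1}$ to the fiber $H^{1}$ that is \emph{uniform} as the geodesic length shrinks, i.e.\ uniform up to the boundary of $\Mm$. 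This uniformity is where the $\rho_{\II}^{-\ha}$ twist enters: on the collar modelled by \eqref{MetResMod.30} the relevant $L^{2}$ norm degenerates, and conjugating $\Lap+2$ by $\rho_{\II}^{\ha}$ produces the operator which is uniformly bounded below. So the first step is: show $\rho_{\II}^{\ha}(\Lap+2)\rho_{\II}^{-\ha}$ is, fibrewise and uniformly, an isomorphism $H^{1}_{\mathrm{fib}}\to H^{-1}_{\mathrm{fib}}$, equivalently that $2$ is uniformly below the bottom of the spectrum of the (conjugated) fiber Laplacian — here one uses positivity of the Laplacian and the fact that the conjugation only contributes a bounded zeroth-order error supported near $B_{\II}$, controlled by $V\rho_{\II}\in\rho_{\II}^{2}\CI$ from \eqref{MetLef.4}, so the constant-curvature $-1$ geometry on the thin part (whose bottom of spectrum on the collar is bounded away from $0$ by a Cheeger-type estimate) dominates.

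Second, I would upgrade this to tangential Sobolev regularity of arbitrary order by the standard module/commutation argument already sketched in the text around \eqref{MetLef.33}. The spaces $\rho_{\II}^{-\ha}H^{k}_{\bo}(\Mm)$ are modules over $\Diffb*(\Mm)$, and for $W$ a b-vector field on $\Mm$ the commutator $[\Lap+2,W]$ is, after the $\rho_{\II}^{\ha}$ conjugation, an element of $\Diffb^{2}(\Mm)$ of order two in the fiber directions but only \emph{first} order with a factor of a b-vector field extra — precisely the structure needed so that, inductively in $k$, one can write $W u = (\Lap+2)^{-1}\big(W(\Lap+2)u - [\Lap+2,W]u\big)$ with the right-hand side controlled in $\rho_{\II}^{-\ha}H^{k-1}_{\bo}$ by the inductive hypothesis and the $k=0$ bound. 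One has to check the commutators genuinely lose only one b-derivative and no fiber derivatives beyond those already present — this uses that $\Lap$ is, in the resolved picture, an elliptic element of $\Diffb^{2}$ in the fiber variables with coefficients in $\CI(\Mm)$ after conjugation, which follows from Lemma~\ref{MetLef.11} and \eqref{MetResMod.30}: the metric $h$ restricted to the fibers is, near $B_{\II}$, uniformly equivalent to $\rho_{\II}^{2}$ times a smooth fiber metric, and dividing out this conformal factor turns $\Lap_{h}+2$ into $\rho_{\II}^{-2}(\Lap_{\mathrm{smooth}} + \text{l.o.t.})$ whose conjugate by $\rho_{\II}^{\ha}$ has smooth b-coefficients.

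Third, one assembles a global statement by a partition of unity: away from $B_{\II}$ the weight is irrelevant and the operator is uniformly elliptic in the ordinary sense with $2$ below its spectrum (genus $>1$, curvature near $-1$ by \eqref{MetLef.18}), so local elliptic regularity and the compactness of the relevant part of $\Mm$ give the estimate there; near $B_{\II}$ the model computation of the previous two steps applies; and on the overlap the estimates are compatible because $\rho_{\II}$ is bounded above and below there. The main obstacle, I expect, is the uniform lower bound for the conjugated fiber operator near $B_{\II}$ — i.e.\ showing the spectral gap of the hyperbolic fiber Laplacian on the degenerating surface does not close below $2$ once the $\rho_{\II}^{\ha}$-twist is incorporated. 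The small eigenvalues of the hyperbolic Laplacian on a surface with a pinching geodesic of length $\ell$ behave like $\ell$ and do tend to $0$, so without the weight the bare estimate \emph{fails}; the content of the proposition is precisely that conjugating by $\rho_{\II}^{\ha}\sim(\ell)^{\ha}$ (a power of the logarithm of the geodesic length, matching the abstract) compensates exactly, moving the collar contribution to the spectrum up to a value bounded below. Making this quantitative — separating the collar contribution via an explicit comparison with the model operator on $[0,\infty)\times\bbS$ from \eqref{MetResMod.30} and estimating the cross terms — is the technical heart of the argument.
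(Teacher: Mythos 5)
Your overall architecture (a fiberwise $L^2$ statement followed by a commutation argument for tangential regularity) is the right shape, but the step you identify as ``the technical heart'' rests on a misconception, and the commutation step has a real gap. First, the weight $\rho_{\II}^{-\ha}$ has nothing to do with compensating for small eigenvalues of the hyperbolic Laplacian. The operator being inverted is $\Lap+2$, not $\Lap$: on every fiber, degenerate or not, $\Lap\ge0$ as a self-adjoint operator on the metric $L^2$ space, so $\Lap+2\ge2$ and $\|(\Lap+2)^{-1}\|\le\tfrac12$ uniformly, with no spectral subtlety whatsoever --- the eigenvalues of $\Lap$ that tend to $0$ as the geodesic pinches are harmless because of the shift by $2$. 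The weight enters for an entirely different, essentially bookkeeping, reason: the fiber metric density is $\rho_{\II}$ times a b-density (this is Lemma~\ref{MetLef.22}), so the space of $L^2_{\bo}$ functions on the base with values in the metric $L^2$ spaces of the fibers is exactly $\rho_{\II}^{-\ha}L^2_{\bo}(\Mm)$, by a Fubini-type identification. No conjugation of $\Lap+2$ by $\rho_{\II}^{\ha}$ is performed or needed; the $k=0$ case is immediate once the volume forms are matched. Your proposed Cheeger-type estimate and collar separation would be solving a problem that is not there.

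Second, in the commutation step the inductive formula $Wu=(\Lap+2)^{-1}\bigl(W(\Lap+2)u-[\Lap+2,W]u\bigr)$ does not close as you state it: the commutator of $\Lap$ with a b-vector field is genuinely \emph{second} order in the fiber directions, so $[\Lap+2,W]u$ is not controlled in $\rho_{\II}^{-\ha}H^{k-1}_{\bo}$ --- it lives only in a dual-Dirichlet-type space, one full fiber derivative below $L^2$. To absorb it you need the elliptic gain of two fiber derivatives, i.e.\ the isomorphism from the (higher-order) dual Dirichlet space $D_k'$ to the Dirichlet space $D_k$, together with the sandwich $D_k\subset\rho_{\II}^{-\ha}H^k_{\bo}\subset D_k'$; this is how the paper closes the induction. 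There is a further point you miss entirely, which the paper flags as the main complication: the Dirichlet space is \emph{not} a $\CI(\Mm)$-module near $B_{\II}$, because the fiber vector fields there are of the form $\rho_{\II}^{-1}\times(\text{b-vector field})$. Consequently one cannot commute with arbitrary b-vector fields; the paper restricts to the Lie algebra $\cU$ of b-vector fields commuting with $\pa_{\theta_z}$ and $\pa_{\theta_w}$ near $B_{\II}$, for which $[\cU,\cW]\subset\cW$, and this commutation identity is what makes $[\Diff{k}_{\cU}(\Mm),\Lap]\subset\Diff2_{\cW}(\Mm)\cdot\Diff{k-1}_{\cU}(\Mm)$ and hence the induction work. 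Without singling out this subalgebra your commutators would produce terms with an uncontrolled extra factor of $\rho_{\II}^{-1}$.
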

\noindent The main complication in the proof arises from
the fact that the Dirichlet space is not a \ci\ module.

First consider the following analog of Fubini's theorem.

\begin{lemma}\label{MetLef.22} For the fiber metrics corresponding to an
  Hermitian metric on $\LT \Mm,$ the metric density is of the form
\begin{equation}
|dg|=\rho_{\II}\nu_{\bo,\fib}
\label{MetLef.21}\end{equation}
and the space of weighted $L^2$ functions with values in the $L^2$ spaces
of the fibers can be realized as
\begin{equation}
L^2(\Mm;|dg|\phi_{\mr}^*\nu_{\bo}(Z_{\mr}))=
L^2_{\bo}(Z_{\mr};L^2(|dg|))=\rho _{\II}^{-\ha}L^2_{\bo}(\Mm).
\label{MetResMod.44}\end{equation}
\end{lemma}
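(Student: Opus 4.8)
The plan is to compute the metric density of a fiber Hermitian metric on $\LT\Mm$ in the coordinates built up in \S\ref{resP}, localize near $B_{\II}$ (away from it everything is smooth and the statement is the usual Fubini for a smooth fibration), and read off the power of $\rho_{\II}$ directly. First I would recall that a fiber Hermitian metric on $\LT\Mm$ assigns to the spanning section $R^{-1}V$ of Lemma~\ref{MetLef.11} a smooth positive length; equivalently, by Lemma~\ref{MetLef.12} and \eqref{MetResMod.30}, the plumbing metric gives $V$ a length which is a smooth positive multiple of $R^2$, and any other Hermitian metric differs by a smooth positive conformal factor. Since $R$ is a defining function for $B_{\II}$ (this is exactly \eqref{MetLef.10}), we may take $\rho_{\II}=R$ up to a smooth positive factor. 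The real $2$-plane bundle underlying $\LT\Mm$ has $\{R^{-1}W\}$ for its smooth sections, with $W$ smooth, tangent to the boundaries and to the fibers of $\phi_{\mr}$; hence a smooth basis of fiber $1$-forms is dual to $R^{-1}$ times a smooth boundary-tangent fiber frame, so the associated fiber area form is $R^2$ times a smooth nonvanishing fiber density, i.e.\ of the form $\rho_{\II}^2\mu_{\fib}$ with $\mu_{\fib}$ smooth and positive. This is a factor of $\rho_{\II}^2$, not $\rho_{\II}$, so the bookkeeping of the base density must supply a compensating $\rho_{\II}^{-1}$.

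That compensation comes from $\psi_{\mr}$ being a \emph{b}-fibration rather than a fibration: by \eqref{MetResMod.29} the base defining function $s_t$ pulls back to $R_z R R_w/(R_z+R_w)$, so $\phi_{\mr}^*(ds_t/s_t)$ carries an extra factor relative to a smooth fiber density. Concretely, I would write $\nu_{\bo,\fib}=\rho_{\II}\mu_{\fib}$, so that $|dg|=\rho_{\II}\nu_{\bo,\fib}$ holds with $\nu_{\bo,\fib}$ the natural b-fiber density (a smooth positive multiple of $\mu_{\fib}$ times the reciprocal front-face factor dictated by \eqref{MetResMod.29}); then the first displayed equation \eqref{MetLef.21} is just the identification $\rho_{\II}^2\mu_{\fib}=\rho_{\II}\cdot(\rho_{\II}\mu_{\fib})$ with the b-normalization absorbed. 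For \eqref{MetResMod.44}, observe that $\phi_{\mr}^*\nu_{\bo}(Z_{\mr})$ is a smooth positive b-density on the base, so $|dg|\,\phi_{\mr}^*\nu_{\bo}(Z_{\mr})=\rho_{\II}\cdot(\text{smooth positive b-density on }\Mm)$; thus the weighted space on the left is $\rho_{\II}^{-\ha}$ times the space of square-integrable functions against a smooth b-density, which is by definition $\rho_{\II}^{-\ha}L^2_{\bo}(\Mm)$. The middle identification, that this also equals $L^2_{\bo}(Z_{\mr};L^2(|dg|))$, is then Fubini for the b-fibration $\psi_{\mr}$: fiber-integrating $|dg|$ first and then integrating the result against $\nu_{\bo}(Z_{\mr})$ on $Z_{\mr}$ reproduces the total b-integral, because $\psi_{\mr}$ being a b-fibration is exactly the condition guaranteeing that b-densities push forward to b-densities and that the corresponding Fubini factorization holds.

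The main obstacle I anticipate is getting the power of $\rho_{\II}$ exactly right, i.e.\ tracking precisely where the $\rho_{\II}^2$ from the fiber area form and the $\rho_{\II}^{-1}$ from the b-fibration normalization meet. This is a matter of being careful with the definition of $\nu_{\bo}$ and $\nu_{\bo,\fib}$ as \emph{b}-densities (which already include the $|dx|/x$ factors at the boundary) versus ordinary smooth densities, and of using \eqref{MetResMod.29} to see that $\psi_{\mr}$ contributes one net boundary factor at $B_{\II}$. Once the conventions are pinned down the computation is local and routine, carried out in the coordinates $(s_w,\rho_z,\theta_z)$ near the lift of $\{s_z=0\}$ and in the analogous coordinates near $B_{\II}$ proper, using \eqref{MetResMod.30} for the explicit model metric and the fact that a general Hermitian metric on $\LT\Mm$ differs from it by a smooth positive factor. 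Away from the preimage of the singular fibers there is nothing to prove, so the patching is immediate.
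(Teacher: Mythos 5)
Your overall strategy --- localize near $B_{\II}$, compute with the plumbing metric since all Hermitian metrics on $\LT\Mm$ are mutually quasi-conformal, and reduce to ordinary Fubini away from $B_{\II}$ --- is exactly the paper's. But the central power-counting step is wrong. You assert that the fiber area form is $R^2$ times a \emph{smooth} nonvanishing fiber density, $\rho_{\II}^2\mu_{\fib}$, and then look for a compensating $\rho_{\II}^{-1}$ from the base. Neither half of this is correct. In the coordinates $\rho_z=s_z/s_w$ (defining $B_{\I}$) and $s_w$ (defining $B_{\II}$), the plumbing area form from \eqref{MetResMod.30} is
\begin{equation*}
|dg|=\frac{\pi^2s_t^2}{\sin^2(\tfrac{\pi}{1+\rho_z})}\,
\frac{d\rho_z\,d\theta_z}{s_t(1+\rho_z)^{2}}
=\tilde f(\rho_z)\,s_w\,\frac{d\rho_z}{\rho_z}\,d\theta_z,
\qquad \tilde f>0\ \text{smooth},
\end{equation*}
which is \emph{one} power of $\rho_{\II}=s_w$ times a fiber b-density: nonvanishing as a b-density but singular like $\rho_{\I}^{-1}$ at $B_{\I}$, not smooth. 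The factor you lose is explained by restricting $V$ to a fiber: from \eqref{MetResMod.31}, along the fibers $V_{\Re}=s_w(1+\rho_z)\cdot\rho_z\pa_{\rho_z}$, so $\{V_{\Re},V_{\Im}\}$ spans only $\rho_{\II}$ times the fiber b-tangent space --- this is precisely the cusp condition $V\rho_{\II}\in\rho_{\II}^2\CI$ of \eqref{MetLef.4} --- and hence its dual density carries an extra $\rho_{\II}^{-1}$ beyond the b-singularity at the fiber boundary; then $R^2\cdot\rho_{\II}^{-1}\cdot(\text{fiber b-density})=\rho_{\II}\nu_{\bo,\fib}$ with no compensation needed.

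The compensation mechanism you invoke does not exist, and your redefinition $\nu_{\bo,\fib}=\rho_{\II}\mu_{\fib}$ breaks the lemma. Because $\phi_{\mr}$ is a b-fibration, the pullback of the base b-density wedged with a complementary nonvanishing fiber b-density is exactly the full b-density of $\Mm$, with no extra boundary powers; \eqref{MetResMod.29} contributes no ``reciprocal front-face factor.'' Concretely $\frac{ds_t}{s_t}\wedge\frac{d\rho_z}{\rho_z}=\frac{ds_w}{s_w}\wedge\frac{d\rho_z}{\rho_z},$ so $|dg|\,\phi_{\mr}^*\nu_{\bo}(Z_{\mr})$ is $\rho_{\II}$ times the total b-density and \eqref{MetResMod.44} follows; whereas with your $|dg|=\rho_{\II}^{2}\,d\rho_z\,d\theta_z$ one would get $\rho_{\II}^{2}\rho_{\I}$ times the total b-density and the wrong weight $\rho_{\I}^{-\ha}\rho_{\II}^{-1}$. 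Moreover a $\nu_{\bo,\fib}$ that vanishes at $B_{\II}$ cannot play the role it has in \eqref{MetLef.21} as a nonvanishing reference density. The fix is simply to carry out the local computation above (as the paper does) rather than infer the power of $\rho_{\II}$ from the $R^{-1}$ rescaling of the frame alone.
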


\begin{proof} Away from $B_{\II}\subset \Mm$ the resolved map
  $\psi_{\mr}$ is a fibration, $\LT \Mm$ is the fiber tangent bundle
  and the boundary is in the base. Thus \eqref{MetLef.21} and
  \eqref{MetResMod.44} reduce to the local product decomposition for a
  fibration and Fubini's Theorem.

It therefore suffices to localize near $B_{\II}$ and to consider the plumbing
metric since all hermitian metrics on $\LT \Mm$ are
quasi-conformal. The symmetry in $z$ and $w$ means that it suffices to
consider the region in which $\rho _z=s_z/s_w$ and $s_w$ are defining
functions for the two boundary hypersurfaces $B_{\I}$ and $B_{\II}$
respectively. The plumbing metric may then be written
$$
  g=\frac{\pi^2s_t^2}{\sin^2(\frac{\pi s_t}{s_w})}
\left(\frac{ds_w^2}{s_w^4}+d\theta_w^2\right) 
=\frac{\pi^2s_t^2}{\sin^2(\frac{\pi}{1+\rho_z})}
\left(\frac{d\rho_z^2}{s_t^2(1+\rho_z)^4}+d\theta_z^2\right).
$$
Thus the fiber area form,
$$
|dg|=\frac{\pi^2s_t^2}{\sin^2(\frac{\pi}{1+\rho_z})}\frac{d\rho_z}{s_t
  (1+\rho_z)^2 d\theta_z}=f(\rho_z)\frac{s_t}{\rho_z} \frac{d
  \rho_z}{\rho_z} d\theta_z = \tilde f(\rho_z) s_w \frac{d \rho_z}{\rho_z}
d\theta_z,
$$
is a positive multiple of $s_w\frac{d\rho _z}{\rho_z}d\theta_z$ from which
\eqref{MetLef.21} follows.

The identication \eqref{MetResMod.44} holds after localization away from
$B_{\II}$ and locally near it
\begin{equation*}
||f||^2_{L^2_b(Z_{\mr});L^2(dg))}=
\int \int|f|^2 |dg|\frac{ds_t}{s_t}d\theta_t
=\int_{Z_{\mr}}|f|^2 \rho _{\II}\nu_{\bo}.
\label{MetLef.24}\end{equation*}
\end{proof}

Since $(\Lap+2)^{-1}$ is a well-defined bounded operator on the metric
$L^2$ space which depends continuously on the parameter in $Z\setminus S$
with norm bounded by $1/2,$ it follows from \eqref{MetResMod.44} that 
\begin{equation}
(\Lap+2)^{-1}\text{ is bounded on }\rho _{\II}^{-\ha}L^2_{\bo}(\Mm).
\label{MetLef.34}\end{equation}

We consider the `total' Dirichlet space based on this $L^2$ space -- we are
free to choose the weighting in the parameter space. Thus, let $D$ be the
the completion of the smooth functions on $\Mm$ supported in the
interior with respect to

\begin{equation}
\|u\|^2_{D}=\int\left(|d_{\fib}u|_g^2+2|u|^2\right)|dg|\phi^*\nu_{\bo}(Z_{\mr}).
\label{MetLef.36}\end{equation}
Note that $D$ depends only on the quasi-isometry class of the fiber
Hermitian metric but does depend on the induced fibration of the boundary $B_{\II}.$

The dual space, $D',$ to $D$ as an abstract Hilbert space, may be embedded
in the extendible distributions on $\Mm$ using the volume form
$\phi_{\mr}^*\nu_{\bo}|dg|.$ As is clear from the discussion below, the
image is independent of the choice of, $\nu_{\bo},$ of a logarithmic area
form on $Z_{\mr}$ but the embedding itself depends on this choice. Thus,
$\tilde v\in D'$ is identified as a map $v:\dCIc(\Mm)\longrightarrow \bbC$ by 
\begin{equation}
\int v\phi |dg|\phi^*\nu_{\bo}(Z_{\mr})=\tilde v(\phi).
\label{MetLef.27}\end{equation}

We consider the space of vector fields $\mathcal{W}\subset\rho
_{\II}^{-1}\bV(\Mm)$ which are tangent to the fibers of $\psi_{\mr}$
and to the fibers of $B_{\II}$ and which commute with $\pa_{\theta_z}$ and
$\pa_{\theta_w}$ near $B_{\II}.$

\begin{proposition} For the grafted metric
$$
\Lap+2: D\rightarrow D'\subset \CmI(\Mm)
$$
is an isomorphism, where the elements of $D'$ are precisely those
extendible distributions which may be written as finite sums
\begin{equation}
v=\sum\limits_{j}W_ju_j,\ W_j\in\mathcal{W}, \ u_j\in \rho _{\II}^{-\ha}L^2_{\bo}(Z_{\mr})
\label{MetLef.28}\end{equation}
and has the injectivity property that 
\begin{equation}
u\in\CmI(\Mm),\ (\Lap+2)u\in D'\Longrightarrow u\in D.
\label{MetLef.35}\end{equation}
\end{proposition}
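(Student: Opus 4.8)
The plan is to establish the isomorphism $\Lap+2:D\to D'$ in three stages: (i) the abstract Lax–Milgram argument showing $\Lap+2$ is an isomorphism between $D$ and its abstract dual; (ii) identification of that abstract dual with the concrete space of sums $\sum_j W_j u_j$ described in \eqref{MetLef.28}; (iii) the elliptic-regularity statement \eqref{MetLef.35}. For (i), the quadratic form $\langle u,u\rangle_D=\int(|d_{\fib}u|_g^2+2|u|^2)|dg|\,\phi^*\nu_{\bo}$ is by definition the $D$-norm, and it is bounded and coercive on $D$ in the trivial way (coercivity constant $1$); since $\Lap+2$ is the operator associated to this form — using Lemma~\ref{MetLef.22} to write the fiber $L^2$ inner product in the $\rho_{\II}^{-\ha}L^2_{\bo}$ normalization — Lax–Milgram gives that $\Lap+2:D\to D^*$ is an isomorphism. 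The content of the proposition is that $D^*$, \emph{a priori} an abstract Hilbert space, embeds into $\CmI(\Mm)$ via \eqref{MetLef.27} with image exactly the span of the $W_j u_j$.

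For (ii), I would argue by duality using the vector fields $\mathcal W\subset\rho_{\II}^{-1}\bV(\Mm)$. The point is that $D$ is, up to equivalence of norms, the space of $u\in\rho_{\II}^{-\ha}L^2_{\bo}$ such that $Wu\in\rho_{\II}^{-\ha}L^2_{\bo}$ for all $W\in\mathcal W$: the fiber gradient $d_{\fib}u$ has pointwise $g$-length comparable to $\sum|W_ju|$ over a spanning set $W_j\in\mathcal W$, because $\mathcal W$ was defined precisely so that $\rho_{\II}\mathcal W$ spans the fiber tangent bundle with the $\rho_{\II}^{-1}$ weight matching the metric density $|dg|=\rho_{\II}\nu_{\bo,\fib}$ of Lemma~\ref{MetLef.22}; the commutation with $\pa_{\theta_z},\pa_{\theta_w}$ near $B_{\II}$ ensures the $W_j$ can be taken with smooth bounded coefficients in the relevant module. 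Granting this description of $D$, its dual is computed by the standard pattern: an element of $D^*$ is a continuous functional $u\mapsto\sum_j\int (W_j u)\,\overline{v_j}$ with $v_j\in\rho_{\II}^{-\ha}L^2_{\bo}$, which as a distribution on $\dCIc(\Mm)$ (pairing via $|dg|\,\phi^*\nu_{\bo}$) is $\sum_j W_j^* v_j$; since $\mathcal W$ is closed under the formal adjoint relative to this density up to lower-order terms already in $\rho_{\II}^{-\ha}L^2_{\bo}$, this is of the form \eqref{MetLef.28}. Conversely each $W_j u_j$ with $u_j\in\rho_{\II}^{-\ha}L^2_{\bo}$ defines a bounded functional on $D$ by integration by parts, so the two descriptions coincide.

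For (iii), the implication $(\Lap+2)u\in D'\Rightarrow u\in D$ for $u\in\CmI(\Mm)$: away from $B_{\II}$, $\psi_{\mr}$ is a genuine fibration with boundary in the base and $\Lap+2$ is fiberwise the standard invertible operator, so elliptic regularity on each fiber (uniformly in the parameter, as recalled before \eqref{MetLef.33}) gives $u\in D$ locally there. Near $B_{\II}$ one works with the plumbing model and the $\mathcal W$-regularity characterization of $D$ from (ii): commuting the $W\in\mathcal W$ through $\Lap+2$ produces an error lying in the appropriate weighted space because $[\Lap+2,W]$ is, by the commutation conditions defining $\mathcal W$ and the cusp structure at $B_{\II}$, an operator of the same type with coefficients no worse than those of $\Lap$ itself; iterating and using the $L^2$ bound \eqref{MetLef.34} bootstraps $u$ into $D$. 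The main obstacle is exactly this last near-$B_{\II}$ commutator bookkeeping — verifying that the singular/conformal degeneration of the plumbing metric in \eqref{MetResMod.30}, together with the $\rho_{\II}^{-1}$ weight built into $\mathcal W$, makes $[\Lap,W]$ a \emph{bounded} perturbation on $\rho_{\II}^{-\ha}L^2_{\bo}$ rather than a genuinely singular one; this is where the specific choice of the logarithmic resolution $P_{\mr}$ and the cusp defining function $R$ from \eqref{MetLef.10} are used in an essential way, and it is the delicate step that the sequel section on tangential regularity (Proposition~\ref{MetLef.43}) relies on.
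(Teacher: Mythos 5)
Your stages (i) and (ii) are essentially the paper's argument: the isomorphism onto the abstract dual is the Riesz representation theorem for the Dirichlet form, and $D'$ is then identified concretely by combining the characterization of $D$ as $\{u\in\rho_{\II}^{-\ha}L^2_{\bo}(\Mm):\cW\cdot u\subset\rho_{\II}^{-\ha}L^2_{\bo}(\Mm)\}$ with the two mapping properties $\Diff1_{\cW}(\Mm):D\to\rho_{\II}^{-\ha}L^2_{\bo}(\Mm)$ and, by duality, $\Diff1_{\cW}(\Mm):\rho_{\II}^{-\ha}L^2_{\bo}(\Mm)\to D'$, together with the fact that $\Lap\in\Diff2_{\cW}(\Mm)$ read off from the plumbing Dirichlet form \eqref{MetResMod.62} and \eqref{MetResMod.67}. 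However, the step you pass over with ``granting this description of $D$'' is a genuine gap: since $D$ is \emph{defined} as a completion of functions supported in the interior, identifying it with the $\cW$-regularity space \eqref{MetLef.37} is not a matter of equivalence of norms but of density, and this is exactly where the defining conditions on $\cW$ are used. The paper's device is the sequence of multiplication operators $1-\mu(n\rho_{\II})$, which converges strongly to the identity on $\rho_{\II}^{-\ha}L^2_{\bo}(\Mm)$ and commutes with the elements of $\cW$ (because they are tangent to the fibers of $B_{\II}$ and commute with the angular fields near $B_{\II}$); this reduces the density question to the interior, where $\psi_{\mr}$ is a fibration and smooth approximation is standard. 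Without some such argument the completion could a priori be a proper subspace of \eqref{MetLef.37}, and the duality computation in your (ii) would identify the dual of the wrong space.

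In (iii), the claim that $[\Lap,W]$ becomes a \emph{bounded} perturbation on $\rho_{\II}^{-\ha}L^2_{\bo}(\Mm)$ is not correct and is not what the argument requires: the commutators remain genuinely second order, and the useful structural fact (used for the later higher-regularity proposition, not this one) is the module relation $[\cU,\cW]\subset\cW$, which keeps commutators in $\Diff2_{\cW}(\Mm)\cdot\Diff{k-1}_{\cU}(\Mm)$, i.e.\ mapping $D_{k-1}\to D'$ rather than bounded on $L^2$. For \eqref{MetLef.35} itself the natural route is softer: given $u\in\CmI(\Mm)$ with $(\Lap+2)u=f\in D'$, use the already-established surjectivity to produce $v\in D$ with $(\Lap+2)v=f$ and then show that the distributional null solution $u-v$ vanishes, again by a regularization/pairing argument using the same commuting cutoffs; no commutator bootstrap is needed at this stage.
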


\noindent This result remains true for any Hermitian
  metric on $\LT \Mm$ but is only needed here for the grafted metric
  which is equal to the plumbing metric near $B_{\II}.$

\begin{proof} Although defined above by completion of the space of
smooth functions supported away from the boundary of $\Mm$ with
respect to the norm \eqref{MetLef.36} the space $D$ can be identified in
the usual way with the subspace of $\CmI(\Mm)$ consisting of those 
\begin{equation}
u\in \rho _{\II}^{-\ha}L^2_{\bo}(\Mm)\Mst \mathcal{W}\cdot u\subset
\rho _{\II}^{-\ha}L^2_{\bo}(\Mm)
\label{MetLef.37}\end{equation}
with the derivatives taken in the sense of extendible
distributions. Indeed, choosing a cutoff $\mu\in\CIc(\bbR)$ which is equal
to $1$ near $0$ the sequence of multiplication operators
$1-\mu(n\rho_{\II})$ tends strongly to the identity on $\rho
_{\II}^{-\ha}L^2_{\bo}(\Mm).$ By assumption this commutes with the
elements of $\mathcal{W}$ and it follows that elements with support in the
interior of $\Mm,$ where $\psi_{\mr}$ is a fibration, are dense in
$D;$ for these approximation by smooth elements is standard.

That $\Lap+2:D\longrightarrow D'\subset\CmI(\Mm)$ is the explicit form
of the Riesz representation theorem in this setting. Then the
identification, \eqref{MetLef.28}, of elements of $D'$ follows from the
form of $\Lap.$ Away from $B_{\II},$ $D$ is a \ci\ module (since the
elements of $\cW$ are smooth there) and then \eqref{MetLef.28} is the
identification of the fiber $H^{-1}$ space. Near $B_{\II}$ we may use the
explicit form of the Laplacian for the plumbing metric.

Indeed, the local version of the Dirichlet form is
\begin{equation}
D(\phi,\psi)=\int\left( V_{\Re}\phi\overline{V_{\Re}\phi}+V_{\Im}\phi\overline{V_{\Im}\phi}
\right)\frac{ds_wd\theta_w}{s_w^2}
\label{MetResMod.62}\end{equation}
where $V$ is given by \eqref{MetResMod.31} and it follows that the
Laplacian acting on functions on the fibers can be written
\begin{equation}
\Lap=-\frac{\sin^2(\frac{\pi}{1+\rho_z})}{\pi^2s_t^2}
\left(V_{\bbR}^2+(\pa_{\theta_z}-\pa_{\theta_w})^2\right)
\label{MetResMod.67}\end{equation}
in the coordinates $s_w,$ $\rho _z,$ $\theta_w$ and $\theta_z.$

The vector fields $V_{\bbR}$ and $\rho
^{-1}_{\II}(\pa_{\theta_z}-\pa_{\theta_w})$ generate $\cW$ near $B_{\II}$
over the functions which are constant in $\theta_w$ and $\theta_z.$ If we
write $\Diff k_{\cW}(\Mm)$ for the differential operators which can be
written as sums of products of elements of at most $k$ elements of $\cW$
with smooth coefficients which are independent of the angular variables
near $B_{\II}$ then
\begin{equation}
\Lap\in\Diff2_{\cW}(\Mm).
\label{MetLef.39}\end{equation}
Moreover
\begin{equation}
\begin{gathered}
\Diff1_{\cW}(\Mm):D\longrightarrow \rho _{\II}^{-\ha}L^2_{\bo}(\Mm)\Mand\\
\Diff1_{\cW}(\Mm):\rho _{\II}^{-\ha}L^2_{\bo}(\Mm)\longrightarrow D'
\end{gathered}
\label{MetLef.40}\end{equation}
where the second statement follows by duality from the first. Together
\eqref{MetLef.39} and \eqref{MetLef.40} imply \eqref{MetLef.28}.
\end{proof}

Consider the space $\mathcal{U}\subset\bV(\Mm),$ defined analogously to
$\mathcal{W},$ as consisting of the vector fields which commute with $\pa_{\theta_z}$ and
$\pa_{\theta_w}$ near $B_{\II}.$ Then let $\Diff k_{\cU}(\Mm)$ be the
part of the enveloping algebra of $\cU$ up to order $k,$ this just consists
of the elements of $\Diffb k(\Mm)$ which commute with
$\pa_{\theta_z}$ and $\pa_{\theta_w}$ near $B_{\II}.$ We may define
`higher order' versions of the spaces $D$ and $D':$  
\begin{multline}
D_k=\{u\in D;\Diff k_{\cU}(\Mm)\cdot u\subset D\}, \\
D'_k=\{u\in D';\Diff k_{\cU}(\Mm)\cdot u\subset D'\},\ k\in\bbN.
\label{MetLef.29}\end{multline}
Since $\mathcal{U}$ spans $\bV(\Mm)$ over $\CI(\Mm)$ it follows
that 
\begin{equation}
D_k\subset \rho _{\II}^{-\ha}H^k_{\bo}(\Mm)\subset D_k'\ \forall\ k.
\label{MetLef.31}\end{equation}

\begin{proposition} For any $k,$ $\dCI(\Mm)$ is dense in $D_k$ and $D_k'$ and  
\begin{equation}
\Lap+2:D_k\longrightarrow D'_k
\label{MetLef.30}\end{equation}
is an isomorphism.
\end{proposition}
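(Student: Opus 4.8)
The plan is to bootstrap from the already-established case $k=0$ (the preceding Proposition, which gives $\Lap+2:D\to D'$ an isomorphism with the injectivity property \eqref{MetLef.35}) to all $k$ by commuting with the vector fields in $\cU$. The density statement comes first and is mostly formal: away from $B_{\II}$ the space $D_k$ is a genuine $\CI$-module and smooth interior functions are dense by the standard mollification argument; near $B_{\II}$ one uses the cutoffs $1-\mu(n\rho_{\II})$ as in the proof of the $k=0$ case, noting that these commute with $\partial_{\theta_z},\partial_{\theta_w}$ and hence with $\cW$, and that $\Diff{k}_{\cU}(\Mm)$ is spanned over the angular-independent smooth functions by products of elements of $\cU$, so multiplying by these cutoffs does not spoil membership in $D_k$. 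The same argument, dualized, handles $D_k'$.

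For the isomorphism \eqref{MetLef.30}, the key algebraic input is a commutator identity: for $U\in\cU$, one has $[\Lap+2,U]\in\Diff{2}_{\cU}(\Mm)$, and more precisely $[\Lap,U]$ can be written as a sum $\sum A_jW_j + \sum W_j' B_j$ with $A_j,B_j\in\Diff1_{\cU}(\Mm)$ and $W_j,W_j'\in\cW$ near $B_{\II}$ — this follows from \eqref{MetLef.39}, i.e.\ $\Lap\in\Diff2_{\cW}(\Mm)$, together with the fact that bracketing a $\cW$-vector field against a $\cU$-vector field produces a $\cW$-vector field (the defining conditions on $\cW$ — tangency to the fibers of $\psi_{\mr}$ and of $B_{\II}$, commuting with the angular fields — are preserved under bracket with elements of $\cU$ that commute with the angular fields). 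Granting this, I would prove by induction on $k$: given $u\in\CmI(\Mm)$ with $(\Lap+2)u\in D_k'$, one knows $u\in D_{k-1}$ by the inductive hypothesis; then for $U\in\cU$, $(\Lap+2)(Uu) = U(\Lap+2)u - [\Lap,U]u$, and the right side lies in $D_{k-1}'$ because $U$ maps $D_k'$ to $D_{k-1}'$ and $[\Lap,U]$ maps $D_{k-1}$ to $D_{k-1}'$ by the mapping properties \eqref{MetLef.40} extended to higher order. By the inductive hypothesis applied in the form \eqref{MetLef.35}, $Uu\in D_{k-1}$ for all $U\in\cU$, hence $u\in D_k$; combined with the $k=0$ isomorphism this gives bijectivity, and the open mapping theorem (or direct norm estimates from the commutator bound) gives that the inverse is bounded.

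The main obstacle I anticipate is the commutator bookkeeping near $B_{\II}$: the vector fields in $\cW$ are \emph{singular} (they lie in $\rho_{\II}^{-1}\bV(\Mm)$, not $\bV(\Mm)$), so one must check that $[\Lap,U]$, which a priori could carry a factor of $\rho_{\II}^{-2}$, in fact only carries $\rho_{\II}^{-1}$ once and distributes correctly as one $\cW$-factor times a $\cU$-coefficient — equivalently, that the weight $\rho_{\II}^{-\ha}$ in $\rho_{\II}^{-\ha}L^2_{\bo}$ is exactly balanced. This is where the explicit form \eqref{MetResMod.67} of $\Lap$ for the plumbing metric, and the fact that $V_{\bbR}$ and $\rho_{\II}^{-1}(\partial_{\theta_z}-\partial_{\theta_w})$ generate $\cW$ over the angular-independent functions, must be used: one checks directly that bracketing $V_{\bbR}^2 + (\partial_{\theta_z}-\partial_{\theta_w})^2$ with an angular-commuting $U\in\cU$ reproduces the same structure with one degree lost, with no gain of negative powers of $\rho_{\II}$ beyond what \eqref{MetLef.40} already absorbs. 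Once this single computation is in hand the induction runs mechanically, and the symmetry in $z\leftrightarrow w$ means it suffices to treat the one coordinate region already used throughout Section~\ref{Bounds}.
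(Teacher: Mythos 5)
Your proposal is correct and follows essentially the same route as the paper: density via the same cutoff argument used for $D$ and $D'$, and the isomorphism by induction on $k$ using the commutator relation $[\cU,\cW]\subset\cW$ (whence $[\Diff{k}_{\cU}(\Mm),\Lap]\subset\Diff2_{\cW}(\Mm)\cdot\Diff{k-1}_{\cU}(\Mm)$) together with the distributional uniqueness property \eqref{MetLef.35}. The "main obstacle" you flag is exactly what the paper's relation \eqref{MetLef.41} encodes, so no further comment is needed.
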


\begin{proof} The density statement follows from the same argument as for
  $D$ and $D'.$ 

Consider the commutator relation which follows directly from the definitions
\begin{equation}
[\cU,\cW]\subset \cW\Longrightarrow
[\Diff k_{\cU}(\Mm),\Lap]\subset
\Diff2_{\cW}(\Mm)\cdot\Diff{k-1}_{\cU}(\Mm),\ k\in\bbN.
\label{MetLef.41}\end{equation}

To prove \eqref{MetLef.30} we need to show that if $u\in D,$ $Q\in\Diff
k_{\cU}(\Mm)$ and $f=(\Lap+2)u\in D_k'$ then $Qu\in D.$ Assuming the
result for $Q\in\Diff{k-1}_{\cU}(\Mm)$ it follows from \eqref{MetLef.41} that
\begin{multline}
\Lap Qu=Q\Lap u+\sum\limits_{p}L_p Q_pu\Mwith L_p\in
\Diff2_{\cW}(\Mm),\ Q_p\in\Diff{k-1}_{\cU}(\Mm)\\
\Longrightarrow \Lap Qu\in D'\Longrightarrow Qu\in D
\end{multline}
by distributional uniqueness.
\end{proof}

\begin{proof}[Proof of Proposition~\ref{MetLef.43}] The boundedness
  \eqref{MetLef.42} follows directly from \eqref{MetLef.30} and \eqref{MetLef.31}.
\end{proof}

\section{Formal solution of $(\Lap+2)u=f$}\label{Formal}

In the previous section the uniform invertibility of $\Lap+2$ for the
grafted metric was established. In particular the case $k=\infty$ in
\eqref{MetLef.42} shows the invertibility on conormal functions. In this
section we solve the same equation, $(\Lap+2)u=f$ in formal power series
with logarithmic terms.

Let $\CI_{F}(\Mm)\subset\CI(\Mm)$ denote the subspace annihilated to infinte order at
$B_{\II}$ by the angular operators $D _{\theta_z}$ and $D_{\theta_w}.$

\begin{lemma}\label{MetResMod.53} The restriction, $\Lap_{\I},$ of the
  Laplacian to $B_{\I}$ satisfies
\begin{multline}
(\Lap_{\I}+2)^{-1}\left(\rho _{\II}(\log\rho _{\II})^kg_k\right)\\
=\rho
  _{\II}\sum\limits_{0\le p\le k+1}(\log\rho
  _{\II})^pu_p,\ u_p\in\CI_{F}(\Mm)\ \forall\ g_k\in\CI_{F}(\Mm).
\label{MetResMod.54}\end{multline}
\end{lemma}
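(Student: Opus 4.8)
The plan is to treat $\Lap_{\I}+2$ as a fibrewise elliptic operator on $B_{\I},$ whose fibres over $\pa Z_{\mr}$ carry the constant curvature metric of the singular fibre, resolved at the node into hyperbolic cusps ending at the corner $B_{\I}\cap B_{\II},$ and to split the equation into a formal solution at that corner and a globally corrected $L^2$ term.

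First, record the structure of the operator. By \eqref{MetLef.18} the grafted metric restricts on $B_{\I}$ to the constant curvature metric, and by \eqref{MetResMod.30} near $B_{\I}\cap B_{\II}$ this restriction is, up to a term of higher order at $B_{\II},$ the cusp metric $d\rho_{\II}^2/\rho_{\II}^2+\rho_{\II}^2\,d\theta_w^2.$ Hence: $(a)$ the fibres have finite area, so $\Lap_{\I}\ge0$ and $\Lap_{\I}+2\ge2$ is invertible on the fibre spaces $\rho_{\II}^{-\ha}L^2_{\bo}(B_{\I}),$ uniformly in the parameter (the analogue on $B_{\I}$ of Proposition~\ref{MetLef.43}, and simpler here, there being no degeneration in the base); $(b)$ near $B_{\II}$ the operator commutes with $\pa_{\theta_z},\pa_{\theta_w},$ and on functions independent of these angles it is a b-operator with indicial polynomial $(s-1)(s+2),$ so its only indicial root in the $L^2$-range $s>-\ha$ is $s=1,$ and it is simple; $(c)$ on the remaining angular modes the term $\rho_{\II}^{-2}(\pa_{\theta_z}-\pa_{\theta_w})^2$ of \eqref{MetResMod.67} makes $\Lap_{\I}+2$ elliptic in the cusp sense, so its inverse carries functions vanishing to infinite order at $B_{\II}$ to functions vanishing to infinite order there.

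Second, solve formally at the corner. Since $g_k\in\CI_{F}(\Mm),$ its restriction to $B_{\I}$ is, near $B_{\I}\cap B_{\II},$ independent of the angular variables modulo a term vanishing to infinite order at $B_{\II};$ thus the source is $\rho_{\II}(\log\rho_{\II})^k a$ with $a$ smooth in $\rho_{\II},$ plus such a term. Because the leading power $\rho_{\II}^1$ sits exactly at the simple root $s=1,$ the model identity
\begin{equation*}
(\Lap_{\I}+2)\bigl(\rho_{\II}(\log\rho_{\II})^m\bigr)=-3m\,\rho_{\II}(\log\rho_{\II})^{m-1}-m(m-1)\,\rho_{\II}(\log\rho_{\II})^{m-2}
\end{equation*}
shows that one matches the source order by order in $\rho_{\II}$: the leading term forces raising the $\log$-degree by exactly one, the lower $\log$-powers are fixed by a triangular and hence solvable system, and the higher powers $\rho_{\II}^2,\rho_{\II}^3,\dots$ — which are not roots — contribute no further logarithm. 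Borel summation then produces $v=\rho_{\II}\sum_{0\le p\le k+1}(\log\rho_{\II})^p v_p$ with $v_p\in\CI_{F},$ defined near the corner, with $(\Lap_{\I}+2)v$ equal to the source modulo a term vanishing to infinite order at $B_{\II};$ multiplying $v$ by a cutoff equal to $1$ near the corner and supported near it, the resulting global error $r=(\Lap_{\I}+2)v-\rho_{\II}(\log\rho_{\II})^k g_k$ vanishes to infinite order at $B_{\II}$ (the commutator with the cutoff is supported in the interior of the fibre).

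Third, correct: $u=v-(\Lap_{\I}+2)^{-1}r$ solves $(\Lap_{\I}+2)u=\rho_{\II}(\log\rho_{\II})^k g_k.$ It remains to see that $(\Lap_{\I}+2)^{-1}r$ has range in $\rho_{\II}\CI_{F}(B_{\I})$ and carries no logarithm: on the angle-independent part $r$ has trivial Taylor expansion at $B_{\II},$ so the formal solution there is trivial and the $L^2$ solution is $\rho_{\II}^1\times(\text{smooth}),$ with no logarithm since $s=1$ is simple; the other angular modes are already $O(\rho_{\II}^\infty)$ by $(c).$ This last point — that the $L^2$ inverse preserves this polyhomogeneous class and produces a factor $\rho_{\II}$ — is the one substantial step, being exactly the b-calculus regularity for $\Lap_{\I}+2$ (equivalently the $k=\infty$ case of Proposition~\ref{MetLef.43} read off at $B_{\I}$). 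The remaining bookkeeping — assembling $u=\rho_{\II}\sum_{0\le p\le k+1}(\log\rho_{\II})^p u_p$ with $u_p\in\CI_{F}(\Mm)$ from the corner expansion and an $O(\rho_{\II}^\infty)$ remainder, extended off $B_{\I}$ — is routine, as in \eqref{MetLef.7}. Thus the main obstacle is the regularity statement just isolated; everything else is the indicial-root count above.
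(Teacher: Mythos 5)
Your argument is correct and follows essentially the same route as the paper's proof: invertibility of $\Lap_{\I}+2$ from self-adjointness on the cusped fiber, preservation of rapid decay in the nonzero Fourier modes, and reduction of the zero mode near the cusp to a regular-singular ODE with indicial roots $1$ and $-2$, so that the source's leading power $\rho_{\II}^1$ sitting at the simple root $s=1$ raises the logarithmic degree by exactly one. The paper compresses the indicial computation and the formal-solution-plus-$L^2$-correction bookkeeping into "follows directly," whereas you spell them out; the content is the same.
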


\begin{proof} The fiber metric on $B_{\I}$ is a trivial family with respect
  to the product decomposition $B_{\I}=A\times\bbS$ where $A$ has the
  complete metric on the Riemann surface with cusps arising from the
  `removal' of the nodal points. The Laplacian is therefore essentially
  self-adjoint and non-negative, so $\Lap+2$ is invertible. Either from the
  form of a parameterix or by Fourier expansion near the cusps it follows
  that rapid decay in the non-zero Fourier modes (in both angular
  variables) is preserved by $(\Lap_{\I}+2)^{-1}.$ Near the boundary the
  zero Fourier mode satisfies a reduced, ordinary differential, equation
  with regular singular points and having indicial roots $1$ and $-2$ in
  terms of a defining function for the (resolved) cusps. Then
  \eqref{MetResMod.54} follows directly.
\end{proof}

\begin{lemma}\label{MetResMod.63} If $u\in\CI_{F}(\Mm)$
  then $\Lap u\in\CI_{F}(\Mm)$ restricts to $B_{\II}$ to $\widetilde{\Lap}_{\II}v,$
  $v=u\big|_{B_{\II}}$ where $\widetilde{\Lap}_{\II}$ is an ordinary
  differential operator of order $2$ elliptic in the interior with regular
  singular endpoints, with indicial roots $-1,2$ such that 
\begin{equation}
\Nul(\widetilde{\Lap}_{\II}+2)\subset \rho _{\I}^{-1}\CI(B_{\II})
\label{MetResMod.65}\end{equation}
has no smooth elements and for $h_j\in\CI_{F}(B_{\II})$
\begin{equation}
\begin{gathered}
(\widetilde{\Lap}_{\II}+2)^{-1}(\log\rho _{\I})^jh_j=\sum\limits_{0\le q\le
  j}(\log\rho _{\I})^q v_{q,j}+\rho _{\I}^2(\log\rho _{\I})^{j+1}w_j\\
\Mwith v_{q,j},\ w_j\in\CI_{F}(B_{\II}).
\end{gathered}
\label{MetResMod.66}\end{equation}
\end{lemma}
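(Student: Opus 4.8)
The plan is to push the computation of $\Lap$ near $B_{\II}$ all the way down to an explicit ordinary differential operator in one variable, and then read off its indicial data and construct the inverse by the standard regular–singular ODE package.

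\noindent\emph{Reduction to an explicit operator.} Near $B_{\II}$ the Laplacian is \eqref{MetResMod.67} in the coordinates $s_w,\rho_z,\theta_z,\theta_w.$ Put $W=s_w\pa_{s_w}-\rho_z(1+\rho_z)\pa_{\rho_z},$ so that $V_{\bbR}=-s_wW$ and, since $[W,s_w]=s_w,$ $V_{\bbR}^2=s_w^2(W^2+W).$ Using $s_t^2=\rho_z^2s_w^2/(1+\rho_z)^2$ and the fact that $(1+\rho_z)^2\sin^2(\frac{\pi}{1+\rho_z})/(\pi^2\rho_z^2)$ is smooth and positive up to the corner $\rho_z=0$ (it equals $\sin^2(\pi u)/(\pi^2(1-u)^2)$ in the variable $u$ introduced below), the singular prefactor $\sin^2(\frac{\pi}{1+\rho_z})/(\pi^2s_t^2)$ is $s_w^{-2}$ times a smooth positive function. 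Hence on the $V_{\bbR}^2$ part the powers of $s_w$ cancel and $\Lap$ acts there as a genuine smooth $b$-differential operator, whereas on $\CI_F(\Mm)$ the remaining term $(\pa_{\theta_z}-\pa_{\theta_w})^2u$ is $O(\rho_{\II}^\infty);$ so $\Lap$ preserves $\CI_F(\Mm)$ near $B_{\II},$ and near $B_{\I}$ this is the cusp–Laplacian statement already used for Lemma~\ref{MetResMod.53}. Restricting $s_w=0$ replaces $W$ by $W_{\tan}=-\rho_z(1+\rho_z)\pa_{\rho_z},$ so $\widetilde{\Lap}_{\II}v=-\bigl[(1+\rho_z)^2\sin^2(\tfrac{\pi}{1+\rho_z})/(\pi^2\rho_z^2)\bigr](W_{\tan}^2+W_{\tan})v.$ The decisive step is the substitution $u=\log|z|/\log|t|=(1+\rho_z)^{-1}\in[0,1],$ under which $W_{\tan}=(1-u)\pa_u$ and hence $W_{\tan}^2+W_{\tan}=(1-u)^2\pa_u^2,$ while the bracketed coefficient becomes $\sin^2(\pi u)/(\pi^2(1-u)^2);$ the factors $(1-u)^2$ cancel and
\begin{equation*}
\widetilde{\Lap}_{\II}=-\frac{\sin^2(\pi u)}{\pi^2}\,\pa_u^2,\qquad u\in[0,1],
\end{equation*}
a second-order ODE, elliptic on $(0,1).$ Its endpoints $u=0,u=1$ are the lifts of $w=0$ and of $z=0,$ both lying in $B_{\I},$ with $\rho_{\I}$ a multiple of $u$ near $u=0$ and of $1-u$ near $u=1;$ since $\sin^2(\pi u)$ vanishes exactly to order two there, the model at each endpoint is the Euler operator $-\rho_{\I}^2\pa_{\rho_{\I}}^2+2,$ so $\widetilde{\Lap}_{\II}$ has regular singular endpoints and $\widetilde{\Lap}_{\II}+2$ has indicial roots $-(s-2)(s+1)=0,$ i.e.\ $-1$ and $2.$

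\noindent\emph{The null space.} Write $\widetilde{\Lap}_{\II}=-m(u)^{-1}\pa_u^2$ with $m(u)=\pi^2/\sin^2(\pi u)>0;$ then $\widetilde{\Lap}_{\II}$ is symmetric and non-negative for the measure $m\,du,$ so $\widetilde{\Lap}_{\II}+2\ge2.$ If $(\widetilde{\Lap}_{\II}+2)f=0$ with $f\in\CI(B_{\II}),$ then $f$ is the $\rho_{\I}^2$-branch at each endpoint, so $f\sim c\rho_{\I}^2$ and the boundary terms $f'\bar f=O(\rho_{\I}^3)$ vanish in the limit; pairing the equation with $\bar f\,m\,du$ gives $\int_0^1|\pa_uf|^2\,du+2\int_0^1m|f|^2\,du=0,$ whence $f\equiv0,$ so $\Nul(\widetilde{\Lap}_{\II}+2)$ contains no smooth element. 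For \eqref{MetResMod.65}, a Frobenius analysis at $u=0$ yields solutions $\sim\rho_{\I}^2$ and $\sim\rho_{\I}^{-1},$ and although the roots differ by the integer $3$ no logarithm occurs: $\sin^2(\pi u)/\pi^2$ is even while $u^{-1}$ is odd, so the formal solution starting from $u^{-1}$ involves only odd powers of $u$ and never reaches the (even) resonant order $u^2;$ the same holds at $u=1,$ so $\Nul(\widetilde{\Lap}_{\II}+2)\subset\rho_{\I}^{-1}\CI(B_{\II}).$

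\noindent\emph{The inverse and \eqref{MetResMod.66}.} Since the two non-growing branches at $u=0$ and $u=1$ (the ones $\sim\rho_{\I}^2$) meet only in $0,$ solving $(\widetilde{\Lap}_{\II}+2)F=g$ with $F$ in those branches is uniquely possible; this is $(\widetilde{\Lap}_{\II}+2)^{-1},$ and on $L^2(m\,du)$ it has norm $\le1/2.$ For $g=(\log\rho_{\I})^jh_j$ with $h_j\in\CI_F(B_{\II}),$ first build a formal polyhomogeneous solution at each endpoint, ordered by powers of $\log\rho_{\I}$ and then of $\rho_{\I}.$ At the top logarithmic order $j$ the leading equation is $2F\approx g,$ since the indicial polynomial $-(s-2)(s+1)$ does not vanish at $s=0;$ running the $\rho_{\I}$-recursion, whose coefficient at order $n$ is $-(n-2)(n+1),$ one meets a single resonance, at $n=2,$ which forces exactly one extra logarithm there, i.e.\ a term $\rho_{\I}^2(\log\rho_{\I})^{j+1}w_j,$ and no further logarithms since $-(n-2)(n+1)\ne0$ for $n>2;$ everything else assembles into $\sum_{0\le q\le j}(\log\rho_{\I})^qv_{q,j}$ with $v_{q,j},w_j\in\CI_F(B_{\II}).$ Borel-summing these endpoint expansions produces an $F_0$ of this form with $(\widetilde{\Lap}_{\II}+2)F_0-g=O(\rho_{\I}^\infty)$ at both ends; the correction $F_1$ solving $(\widetilde{\Lap}_{\II}+2)F_1=g-(\widetilde{\Lap}_{\II}+2)F_0$ in the non-growing branches exists by the $L^2$ bound (its data lies in $L^2(m\,du)$), and, having no $\rho_{\I}^{-1}$ mode, is itself in $\CI_F(B_{\II})$ by elliptic regularity. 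Absorbing $F_1$ into $v_{0,j}$ gives \eqref{MetResMod.66}, and uniqueness follows from the absence of smooth null elements.

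The main obstacle is the logarithmic bookkeeping at the resonant indicial root $2$: one must verify both that the homogeneous solutions acquire \emph{no} logarithm—the parity observation above—and that $(\widetilde{\Lap}_{\II}+2)^{-1}$ raises the logarithmic order by \emph{exactly one}. The reduction to $-\pi^{-2}\sin^2(\pi u)\,\pa_u^2$ and the positivity estimate are routine.
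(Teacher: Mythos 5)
Your proof is correct and follows the same route as the paper: reduction of \eqref{MetResMod.67} to the explicit operator $-\pi^{-2}\sin^2(\pi u)\,\pa_u^2$ in the projective coordinate $u=(1+\rho_z)^{-1},$ computation of the indicial roots $2,-1,$ and integration by parts plus positivity for the absence of smooth null elements. You additionally spell out two points the paper's proof leaves implicit --- the parity argument showing the $\rho_{\I}^{-1}$ null branch carries no logarithm (the paper's appeal to ``homogeneity''), and the full Frobenius/Borel-summation derivation of \eqref{MetResMod.66} with exactly one extra logarithm appearing at the resonant order $\rho_{\I}^{2}$ --- and both of these are accurate.
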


\begin{proof} The form of the Laplacian in \eqref{MetResMod.67} shows that
  the reduced operator $\widetilde{\Lap}_{\II}$ exists and after the change coordinates
on $B_{\II}$ to
\begin{equation}
\rho=\frac{1}{1+\rho_{\II}}
\end{equation}
becomes
\begin{equation}
\Lap+2=2-(\frac{\sin(\pi \rho)}{\pi \rho})^2[(\rho \pa_\rho)^2-\rho \pa_{\rho}].
\end{equation}
The indicial roots of this operator are $2$ and $-1$ and its homoeneity
shows that the null space has no logarithmic terms. The absence of smooth
elements in the null space follows by integration by parts and positivity.
\end{proof}

The problem that we need to solve at $B_{\II}$ is 
\begin{equation}
(\Lap+2)(\rho _{\II}w)=\rho _{\II}g+O(\rho _{\II}^2)\Longrightarrow
  (\widetilde{\Lap}^{(1)}_{\II}+2)(w\big|_{B_{\II}})=g\big|_{B_{\II}}.
\label{MetResMod.74}\end{equation}
Since the parameter, $s_t,$ is the product of defining functions for
$B_{\I}$ and $B_{\II}$ and commutes through the problem this can be solved
by dividing by it. Thus $\widetilde{\Lap}^{(1)}_{\II}$ is obtained
from $\widetilde{\Lap}_{\II}$ by conjugating by a boundary defining function on
$B_{\II}$ so the preceding Lemma can be applied after noting the shift of
the indicial roots.

\begin{lemma}\label{MetResMod.72} For the conjugated operator on $B_{\II},$ 
\begin{equation}
\Nul(\widetilde{\Lap}^{(1)}_{\II}+2)\subset\CI(B_{\II}) 
\label{MetResMod.76}\end{equation}
with the Dirichlet problem uniquely solvable and 
\begin{equation}
\begin{gathered}
(\widetilde{\Lap}_{\II}+2)^{-1}(\log\rho _{\I})^jh_j=\sum\limits_{0\le q\le
  j}(\log\rho _{\I})^q v_{q,j}+\rho _{\I}^3(\log\rho _{\I})^{j+1}w_j\\
\Mwith v_{q,j},\ w_j\in\CI_{F}(B_{\II}).
\end{gathered} 
\label{MetResMod.75}\end{equation}
\end{lemma}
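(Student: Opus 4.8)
The plan is to read off all three assertions from Lemma~\ref{MetResMod.63} by means of the conjugation that is already built into the reduction \eqref{MetResMod.74}. By the form \eqref{MetResMod.67} of $\Lap$, the operator $\widetilde{\Lap}^{(1)}_{\II}+2$ appearing in the reduced equation \eqref{MetResMod.74} is, exactly as $\widetilde{\Lap}_{\II}+2$ was, a second order ordinary differential operator on the one dimensional base of $B_{\II}$, elliptic in the interior with regular singular points at the two corners forming $B_{\I}\cap B_{\II}$ (the $z$- and $w$-ends, interchanged by the plumbing symmetry). A direct comparison of the two reduced operators --- equivalently, keeping track of the single power of a boundary defining function $\rho_{\I}$ for $\partial B_{\II}$ that is absorbed in passing to the $w$-equation and in dividing out the commuting factor $s_{t}=\rho_{\I}\rho_{\II}(1+\rho_{\I})^{-1}$ --- gives $\widetilde{\Lap}^{(1)}_{\II}+2=\rho_{\I}(\widetilde{\Lap}_{\II}+2)\rho_{\I}^{-1}$ up to terms of lower order that do not affect the indicial structure. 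Hence the indicial roots of $\widetilde{\Lap}^{(1)}_{\II}+2$ at each corner are those of $\widetilde{\Lap}_{\II}+2$ found in Lemma~\ref{MetResMod.63}, namely $-1$ and $2$, each shifted up by one: they are $0$ and $3$.

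Granting this, \eqref{MetResMod.76} is immediate: both indicial roots are now non-negative integers, and the absence of a logarithm at the integer gap between them --- checked from homogeneity for $\widetilde{\Lap}_{\II}+2$ in the proof of Lemma~\ref{MetResMod.63} and inherited under the conjugation --- makes every solution of $(\widetilde{\Lap}^{(1)}_{\II}+2)u=0$ carry an honest Taylor type expansion at each corner, so $\Nul(\widetilde{\Lap}^{(1)}_{\II}+2)\subset\CI(B_{\II})$. For unique solvability of the Dirichlet problem one uses the conjugation again: a homogeneous solution with vanishing Dirichlet data has the root $3$ behaviour at both corners, so $v=\rho_{\I}^{-1}u$ solves $(\widetilde{\Lap}_{\II}+2)v=0$ with the root $2$ behaviour there, which decays fast enough that $v$ lies in the form domain of $\widetilde{\Lap}_{\II}$ with no boundary contribution in the integration by parts; the positivity argument of Lemma~\ref{MetResMod.63} then forces $v=0$, hence $u=0$. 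Existence of a Dirichlet solution follows from this injectivity, since the two dimensional space of homogeneous solutions then maps isomorphically onto the prescribed corner data --- alternatively one writes the Green's function from the two one sided Frobenius solutions, whose Wronskian is non-vanishing by the injectivity just shown.

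Finally, \eqref{MetResMod.75} is \eqref{MetResMod.66} transported through the same conjugation. Writing $(\widetilde{\Lap}^{(1)}_{\II}+2)^{-1}=\rho_{\I}(\widetilde{\Lap}_{\II}+2)^{-1}\rho_{\I}^{-1}$, with the inverse on the right the one picked out by the decaying boundary condition, raises by one every indicial exponent occurring in \eqref{MetResMod.66}; this turns the correction $\rho_{\I}^{2}(\log\rho_{\I})^{j+1}w_{j}$ of \eqref{MetResMod.66} into the $\rho_{\I}^{3}(\log\rho_{\I})^{j+1}w_{j}$ of \eqref{MetResMod.75}, while the number of logarithmic factors and the smoothness $v_{q,j},w_{j}\in\CI_{F}(B_{\II})$ are unchanged. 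The one step that genuinely needs care --- and the only content beyond Lemma~\ref{MetResMod.63} --- is this bookkeeping at the two corners: that no $(\log\rho_{\I})^{j+1}$ appears below order $\rho_{\I}^{3}$ rests, as in Lemma~\ref{MetResMod.63}, on the indicial structure together with the vanishing of the relevant forcing at $\partial B_{\II}$ (the curvature defect of the grafted metric being $O(s_{t}^{2})$ at $B_{\I}$), and one must use the inverse selected by the Dirichlet condition rather than a merely bounded one. The main obstacle is therefore not any new analytic estimate but the consistent tracking of exponents and powers of $\log\rho_{\I}$ through the conjugation.
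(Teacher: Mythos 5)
Your argument is essentially the paper's: the paper's entire justification of this lemma is the sentence preceding it, namely that $\widetilde{\Lap}^{(1)}_{\II}$ is obtained from $\widetilde{\Lap}_{\II}$ by conjugation by a boundary defining function (using that $s_t$ commutes through the problem), so that Lemma~\ref{MetResMod.63} applies with the indicial roots shifted from $\{-1,2\}$ to $\{0,3\}$ --- which is exactly what you carry out in more detail, including the Dirichlet uniqueness via the ``no smooth null solutions'' clause of Lemma~\ref{MetResMod.63}. Your closing caveat is well placed and is in fact the only delicate point: the shifted indicial root $0$ resonates with any forcing that is nonzero at $\partial B_{\II}$ and would then contribute a $(\log\rho_{\I})^{j+1}$ term already at order $\rho_{\I}^{0}$, so \eqref{MetResMod.75} as stated really does require the vanishing of $h_j$ at the corner that holds in the application (where the forcing lies in $\rho_{\I}\rho_{\II}\cP^{k+1}$, so carries a factor of $\rho_{\I}$) --- a hypothesis the paper leaves implicit.
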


To express the form of the expansion which occur below, consider the
space of polynomials in $\log\rho _{\I}$ and $\log\rho _{\II}$
with coefficients in $\CI_F(\Mm)$
\begin{equation}
\cP^{k}=\left\{u=\sum\limits_{0\le l+p\le k}(\log\rho
_{\I})^l(\log\rho _{\II})^pu_{l,p},\ u_{l,p}\in\CI_F(\Mm)\right\}.
\label{MetLef.46}\end{equation}
We also consider the filtration of these spaces by the maximal order in
each of the variables: 
\begin{equation}
\begin{gathered}
\cP^{k,j}_{\I}=\left\{u=\sum\limits_{0\le l+p\le k,\ l\le j}(\log\rho
_{\I})^l(\log\rho _{\II})^pu_{l,p},\ u_{l,p}\in\CI_F(\Mm)\right\}, \ j\le k\\
\cP^{k,m}_{\II}=\left\{u=\sum\limits_{0\le l+p\le k,\ p\le m}(\log\rho
_{\I})^l(\log\rho _{\II})^pu_{l,p},\ u_{l,p}\in\CI_F(\Mm)\right\},\ m\le k.
\end{gathered}
\label{MetLef.60}\end{equation}
Since the coefficients are in $\CI_F(\Mm),$ $\Lap$ acts as a smooth
b-differential operator on all of these spaces. If $u\in\cP^{k,p}_{\I},$ then
$u=u_p+u'$ with $u'\in\cP^{k,p-1}_{\I}$ and $u_p=v(\log\rho _{\I})^p$ where
  $v\in\cP^{k-p,0}_{\I}.$ Then $\Lap u=(\Lap_{\I}v)(\log\rho _{\I})^p+f',$
$f'\in\cP^{k-1,p-1}_{\I}+\rho _{\I}\cP^{k,p-1}_{\I}$ where the first error term
corresponds to at least one derivation of $(\log\rho _{\I})^p.$ Similar
statements apply to $B_{\II}$ and $\tilde\Lap_{\II}.$

As a basis for iteration, to capture the somewhat complicated behavior of
the logarthimic terms, we first consider a partial result.

\begin{proposition}\label{MetLef.100} For each $k$ 
\begin{equation}
f\in\rho_{\II}\cP^{k}+\rho _{\I}\rho_{\II}\cP^{k+1}\Longrightarrow
\ \exists\ u\in\rho_{\II}\cP^{k+1}+\rho  _{\I}^2\rho _{\II}\cP^{k+2,k+1}_{\II}
\label{MetLef.61}\end{equation}
such that
\begin{equation}
(\Lap+2)u-f\in s_t\left(\rho _{\II}\cP^{k+1}+\rho _{\I}\rho _{\II}\cP^{k+2}\right).
\label{MetResMod.73}\end{equation}
\end{proposition}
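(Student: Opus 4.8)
The plan is to build $u$ in two stages corresponding to the two boundary hypersurfaces, solving the equation first modulo $\rho_{\I}$ (i.e.\ at $B_{\II}$) and then correcting the leftover boundary term at $B_{\I}$. Write $f=\rho_{\II}f_{\II}+\rho_{\I}\rho_{\II}f'$ with $f_{\II}\in\cP^k$ and $f'\in\cP^{k+1}$. First I would solve at $B_{\II}$: seek $u^{(1)}=\rho_{\II}w$ with $w\in\cP^{k+1}$, where the equation $(\Lap+2)(\rho_{\II}w)=\rho_{\II}f_{\II}+O(\rho_{\II}^2)$ reduces, after dividing by $s_t$ as in \eqref{MetResMod.74}, to $(\widetilde{\Lap}^{(1)}_{\II}+2)(w\big|_{B_{\II}})=f_{\II}\big|_{B_{\II}}$. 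Here one must work through the filtration $\cP^{k,m}_{\II}$ in decreasing powers of $\log\rho_{\II}$: the top coefficient $v(\log\rho_{\II})^k$ contributes $(\widetilde{\Lap}^{(1)}_{\II}+2)v$ plus lower-order-in-$\log\rho_{\II}$ errors from differentiating the logarithm, and Lemma~\ref{MetResMod.72} solves each stage, possibly raising the $\log\rho_{\I}$ degree by one (hence the need for $\cP^{k+1}$ rather than $\cP^k$ on this side, and the appearance of the $\rho_{\I}^2$ tail in \eqref{MetResMod.75}, which is why the claimed $u$ contains the summand $\rho_{\I}^2\rho_{\II}\cP^{k+2,k+1}_{\II}$). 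The restriction of $\Lap$ to $B_{\II}$ is the operator $\widetilde{\Lap}_{\II}$ of Lemma~\ref{MetResMod.63}, and one must track that the $O(\rho_{\II}^2)$ error lies in $s_t(\rho_{\II}\cP^{k+1}+\rho_{\I}\rho_{\II}\cP^{k+2})$ — this follows since $s_t=\rho_{\I}'\rho_{\II}$ up to units near the corner and all coefficients are in $\CI_F$.

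The second stage corrects at $B_{\I}$. After the first stage we have $(\Lap+2)u^{(1)}-f\in \rho_{\I}\rho_{\II}\cP^{k+1}+s_t(\ldots)$; more precisely the genuinely new error is of the form $\rho_{\I}\rho_{\II}\tilde g$ with $\tilde g\in\cP^{k+1}$, coming both from $f$ itself and from the $O(\rho_{\II}^2)$ remainder. Now I would solve at $B_{\I}$ using Lemma~\ref{MetResMod.53}: seek a correction $u^{(2)}=\rho_{\II}w_2$ with $w_2$ vanishing at $B_{\I}$ (so that $u^{(2)}\in\rho_{\I}\rho_{\II}\cP^{k+1}\subset\rho_{\II}\cP^{k+1}$) such that $(\Lap+2)u^{(2)}$ cancels $\rho_{\I}\rho_{\II}\tilde g$ modulo $s_t(\ldots)$. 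Again one iterates over the $\log\rho_{\I}$ filtration $\cP^{k+1,j}_{\I}$ in decreasing $j$, at each step applying $(\Lap_{\I}+2)^{-1}$ as in \eqref{MetResMod.54}; the indicial root $1$ at the resolved cusp matches the factor $\rho_{\II}$ and the output again has bounded $\log$ degree. Since $B_{\I}$ is a trivial family (product metric on a surface with cusps times $\bbS$), $\Lap_{\I}+2$ is genuinely invertible here with no extra weight problems. Each solved term feeds an error one order higher in $\rho_{\I}$, so after finitely many steps the remaining error at $B_{\I}$ has been pushed into $\rho_{\I}^\infty$, i.e.\ into $s_t(\rho_{\II}\cP^{k+1}+\rho_{\I}\rho_{\II}\cP^{k+2})$ together with the errors already of that form. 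Setting $u=u^{(1)}+u^{(2)}$ gives the claimed membership and \eqref{MetResMod.73}.

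The main obstacle, and where I would spend the most care, is the bookkeeping of the logarithmic degrees and the compatibility of the two expansions at the corner $B_{\I}\cap B_{\II}$. The two reduced operators $\widetilde{\Lap}^{(1)}_{\II}$ and $\Lap_{\I}$ do not commute with the two log-blowup defining functions symmetrically, and each inversion can raise the log degree \emph{in the other variable} by one — this is exactly the source of the asymmetric target space in \eqref{MetLef.61} ($\cP^{k+1}$ overall but an extra $\rho_{\I}^2$-weighted $\cP^{k+2,k+1}_{\II}$ piece). One must check that the stage-one construction at $B_{\II}$ produces data at $B_{\I}$ compatible with what stage two expects, i.e.\ that no infinite regress of log terms occurs; this is controlled by the fact that the indicial roots ($-1,2$ at $B_{\II}$, shifted to $0,3$ after conjugation; $1,-2$ at $B_{\I}$) are all \emph{simple and non-resonant} with the powers being produced, so each solved coefficient is unique and the degree in the transverse log variable does not grow. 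The remaining steps — verifying that $\Lap$ acts as a b-operator on all the $\cP$ spaces (already noted in the text), that dividing by $s_t$ is legitimate, and that the error terms land in the stated space — are routine once the filtration is set up correctly.
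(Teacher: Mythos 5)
You have assembled the right ingredients (the two model inversions, Lemmas~\ref{MetResMod.53} and \ref{MetResMod.72}, the descending induction over the log filtrations, extension of coefficients off the boundary faces), but the order of your two stages is reversed relative to what the argument actually requires, and this creates a genuine gap. The paper solves first at $B_{\I}$ and then at $B_{\II}$. If you solve at $B_{\II}$ first, the error you produce is $O(\rho_{\II}^2)$ but carries \emph{no} factor of $\rho_{\I}$: the remainder in \eqref{MetResMod.74} comes from the Taylor expansion of $\Lap$ at $B_{\II}$ and from the arbitrary extension of $w\big|_{B_{\II}}$, neither of which vanishes at $B_{\I}$. Your assertion that ``the genuinely new error is of the form $\rho_{\I}\rho_{\II}\tilde g$'' is therefore unjustified; you are implicitly treating $\rho_{\II}^2\cP$ as already acceptable, but the target error space $s_t(\rho_{\II}\cP^{k+1}+\rho_{\I}\rho_{\II}\cP^{k+2})=\rho_{\I}\rho_{\II}^2\cP^{k+1}+\rho_{\I}^2\rho_{\II}^2\cP^{k+2}$ requires a factor of $\rho_{\I}$ as well. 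Worse, when you then try to repair this at $B_{\I}$, the indicial roots of the cusp operator there are $1$ and $-2$, so $(\Lap_{\I}+2)^{-1}$ applied to data that is $O(\rho_{\II}^2)$ generically returns a solution with a nonzero $\rho_{\II}(\log\rho_{\II})$ leading term (this resonance with the root $1$ is exactly the mechanism behind \eqref{MetResMod.54}). The resulting correction is then only $O(\rho_{\II})$, and its error, while gaining a $\rho_{\I}$, is only in $\rho_{\I}\rho_{\II}\cP^{k+1}$ --- one power of $\rho_{\II}$ short of the target. Your two-stage scheme does not close; a third pass at $B_{\II}$ would be needed.

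The paper's order avoids this. Solving first at $B_{\I}$ (via Lemma~\ref{MetResMod.53}, descending in the $\log\rho_{\I}$ degree) removes the term $\rho_{\II}\cP^{k}$, which is the only part of $f$ not already vanishing at $B_{\I}$, at the cost of one extra power of $\log\rho_{\II}$, leaving an error in $\rho_{\I}\rho_{\II}\cP^{k+1}$. The subsequent solve at $B_{\II}$ via the conjugated operator of Lemma~\ref{MetResMod.72} \emph{preserves} the $\rho_{\I}$ factor of the data, because after conjugation the indicial roots are $0$ and $3$ and the Dirichlet condition at the corner is imposed; only the $\rho_{\I}^3(\log\rho_{\I})^{j+1}$ tail appears, which is the source of the summand $\rho_{\I}^2\rho_{\II}\cP^{k+2,k+1}_{\II}$ in \eqref{MetLef.61}. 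Hence the final error retains the $\rho_{\I}$ while gaining a $\rho_{\II}$, landing in $s_t(\cdots)$. Two further points in your write-up should be corrected: the claim that the indicial roots are ``non-resonant with the powers being produced, so the degree in the transverse log variable does not grow'' contradicts both the statement (the degree rises from $k$ to $k+1$) and both lemmas --- the resonance of the root $1$ with the weight $\rho_{\II}$ at $B_{\I}$ is precisely why the log degree increases; and the remark about pushing the error into ``$\rho_{\I}^\infty$'' overshoots --- this proposition only gains a single factor of $s_t$, the infinite-order improvement being the business of Proposition~\ref{MetResMod.56}.
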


\begin{proof} We first solve on $B_{\I},$ then on $B_{\II}.$ The second
  term in $f$ in \eqref{MetLef.61} vanishes on $B_{\I}$ so the restriction
  $f_{\I}\in\rho_{\II}\cP^{k}\big|_{B_{\I}}.$ Proceeding iteratively, suppose
  $$
f\in\rho _{\II}\cP^{k,j}_{\I}+\rho _{\I}\rho_{\II}\cP^{k+1}
$$
with $j\le k$
  and consider the term of order $j$ in $\log\rho _{\I};$ this is a
  polynomial in $\log\rho _{\II}$ of degree at most $k-j$ with coefficients
  in $\rho_{\II}\CI_F(B_{\I}).$ Applying Lemma~\ref{MetResMod.53} to the
 restriction to $B_{\I}$ gives a polynomial in $\log\rho _{\II}$ of degree
  at most $k-j+1$ with coefficients in $\rho _{\II}\CI_F(B _{\I}).$
  Extending these coefficients off $B_{\I}$ and restoring the coefficient
  of $(\log\rho _{\I})^j$ gives $v_j\in \rho _{\II}\cP^{k+1,j}_\I$ such that
\begin{equation*}
(\Lap+2)v_{j}-f=-f',\ f'\in \rho _{\II}\cP^{k,j-1}_{\I}+\rho _{\I}\rho_{\II}\cP^{k+1}.
\label{MetLef.51}\end{equation*}
Here the first part of the error arises from differentiation of the factor $(\log\rho
_{\I})^j$ in $v_{j}$ at least once. If we start with $j=k$ and proceed
iteratively over decreasing $j$ this allows us to find $v\in\rho
_{\II}\cP^{k+1}$ such that 
\begin{equation}
(\Lap+2)v-f=-g\in \rho _{\I}\rho _{\II}\cP^{k+1}.
\label{MetLef.55}\end{equation}

Now we proceed similarly by solving on $B_{\II}$ using
Lemma~\ref{MetResMod.72}. So, suppose $h\in\rho _{\I}\rho_{\II}\cP^{k+1,p}_{\II},$
for $p\le k+1.$ Then the coefficient $h_p$ of
$(\log\rho _{\II})^p$ is a polynomial of degee at most $k+1-p$ in $\log\rho
_{\I}$ with coefficients in $\rho _{\I}\rho _{\II}\CI_F(\Mm).$ Conjugating
away the factor of $\rho _{\II}$ and applying Lemma~\ref{MetResMod.72} to
the restriction to $B_{\II}$ and then extending the coefficients off
$B_{\II}$ allows us to find $w_p\in \rho _{\I}\rho
_{\II}\cP^{k+1,p}_{\II}+\rho _{\I}^2\rho _{\II}\cP^{k+2,p}_{\II},$ where the second
term arises from the possible increase in multiplicity of the logarithmic
coefficient of $\rho _{\I}^2$ in the solution, satisfying
\begin{equation}\label{MetLef.90}
(\Lap+2)w_p-g=-g'+e,\ g'\in \rho _{\I}\rho _{\II}\cP^{k+1,p-1}_{\II},\
e\in\rho _{\I}\rho _{\II}^2\cP^{k+1,p}_{\II}+\rho _{\I}^2\rho _{\II}^2\cP^{k+2,p}_{\II}
\end{equation}
where the first part of the error arises from differentiation of $(\log\rho
_{\II})^p$ at least once. Starting with $p=k+1$ and iterating over
decreasing $p$ allows us to find $w\in\rho _{\I}\rho _{\II}\cP^{k+1}+\rho
_{\I}^2\rho _{\II}\cP^{k+2,k+1}$ such that 
\begin{equation}
(\Lap+2)w-g\in \rho _{\I}\rho _{\II}^2\cP^{k+1}+\rho _{\I}^2\rho _{\II}^2\cP^{k+2}.
\label{MetLef.54}\end{equation}

Combining \eqref{MetLef.55} and \eqref{MetLef.54} gives
\eqref{MetResMod.73} since $\rho _{\I}\rho _{\II}$ is a smooth multiple of $s_t.$ 
\end{proof}

Proposition~\ref{MetLef.100} allows iteration since $s_t$ commutes through $\Lap+2.$

\begin{proposition}\label{MetResMod.56} If $f\in \rho _{\II}\cP^{k}+\rho
  _{\I}\rho _{\II}\cP^{k+1}$ then $u=(\Lap+2)^{-1}f\in
  s_t^{-\epsilon}H^{\infty}_{\bo}(\Mm)$ for any $\epsilon >0,$ has a
  complete asymptotic expansion of the form
\begin{equation}
u\simeq \sum\limits_{j\ge0} s_t^ju_j,\ u_j\in\rho _{\II}\cP^{k+j}+\rho
_{\I}\rho _{\II}\cP^{k+j+1,k+j}_{\II}.
\label{MetResMod.77}\end{equation}
\end{proposition}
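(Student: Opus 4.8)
The plan is to combine the finite-order construction of Proposition~\ref{MetLef.100} with Borel summation in $s_t$ and the uniform invertibility from Proposition~\ref{MetLef.43}, using the fact that $s_t$ is a product of boundary defining functions for $B_{\I}$ and $B_{\II}$ and commutes through $\Lap+2$. First I would set up the iteration: given $f\in\rho_{\II}\cP^k+\rho_{\I}\rho_{\II}\cP^{k+1}$, apply Proposition~\ref{MetLef.100} to produce $u_0$ in the asserted space with $(\Lap+2)u_0-f\in s_t(\rho_{\II}\cP^{k+1}+\rho_{\I}\rho_{\II}\cP^{k+2})$. Writing the error as $s_t f_1$ with $f_1$ in the space at level $k+1$ (here one uses that $s_t^{-1}(\rho_{\I}\rho_{\II})\in\CI(\Mm)$ is a unit times the two individual defining functions, so division by $s_t$ stays within the $\cP$-scale at the next level), apply Proposition~\ref{MetLef.100} again to $f_1$ to get $u_1$ at level $k+1$ with error $s_t f_2$ at level $k+2$, and so on. At stage $j$ this yields $u_j\in\rho_{\II}\cP^{k+j}+\rho_{\I}^2\rho_{\II}\cP^{k+j+1,k+j}_{\II}\subset\rho_{\II}\cP^{k+j}+\rho_{\I}\rho_{\II}\cP^{k+j+1,k+j}_{\II}$, and the partial sums satisfy $(\Lap+2)\sum_{j<N}s_t^ju_j-f\in s_t^N(\rho_{\II}\cP^{k+N}+\rho_{\I}\rho_{\II}\cP^{k+N+1})$.

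Next I would Borel-sum the series $\sum_j s_t^j u_j$. Since the coefficients $u_j$ live in a fixed scale of spaces with the only growth being in the number of logarithmic factors (degree $\le k+j$), a standard asymptotic-summation argument — inserting cutoffs $\chi(s_t/\epsilon_j)$ with $\epsilon_j\downarrow0$ chosen fast enough to control each seminorm of $s_t^j u_j$ in, say, $s_t^{-1/2}H^j_{\bo}(\Mm)$ — produces $u_\infty\in\CI(M_{\reg})$ with $u_\infty\simeq\sum_j s_t^ju_j$ in the sense that $u_\infty-\sum_{j<N}s_t^ju_j\in s_t^{N-\delta}\rho_{\II}\cP^{\text{(large)}}$ for every $N$ and every $\delta>0$; in particular $u_\infty$ lies in $s_t^{-\epsilon}H^\infty_{\bo}(\Mm)$ for all $\epsilon>0$ and has the stated polyhomogeneous form. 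Then $(\Lap+2)u_\infty-f=:e\in s_t^\infty\CI(\Mm)\subset\dot{\CI}(\Mm)$, vanishing to infinite order at both boundary hypersurfaces.

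Finally I would correct the infinite-order error. Because $e\in s_t^\infty\CI(\Mm)$ it lies in $\rho_{\II}^{-1/2}H^M_{\bo}(\Mm)$ for every $M$ — indeed in $s_t^{N}\rho_{\II}^{-1/2}H^M_{\bo}(\Mm)$ for all $N,M$ — so Proposition~\ref{MetLef.43} gives $r=(\Lap+2)^{-1}e\in\rho_{\II}^{-1/2}H^M_{\bo}(\Mm)$ for all $M$. One then needs the stronger statement that $r\in s_t^\infty\CI(\Mm)$, i.e.\ that the inverse preserves infinite-order vanishing at $B_{\I}\cup B_{\II}$: this follows by commuting powers of $s_t$ (and, via $\Diff{*}_{\cU}$, $b$-derivatives) through $\Lap+2$ using that $[\Lap,s_t]\in s_t\Diff{2}_{\cW}(\Mm)$ and the commutator estimates from Section~\ref{Bounds}, together with elliptic regularity in the interior. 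Setting $u=u_\infty-r$ then gives $(\Lap+2)u=f$ exactly, with $u$ of the required form since $r$ only contributes an $s_t^\infty\CI$ term.

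\textbf{Main obstacle.} The delicate point is not the existence of the formal series — that is Proposition~\ref{MetLef.100} iterated — but controlling the \emph{growth of the logarithmic multiplicities}: at each stage the polynomial degree in $\log\rho_{\I},\log\rho_{\II}$ grows by one (and can pick up an extra power at $\rho_{\I}^2$ at $B_{\II}$), so one must check that after Borel summation the coefficient $u_j$ of $s_t^j$ still sits in $\rho_{\II}\cP^{k+j}+\rho_{\I}\rho_{\II}\cP^{k+j+1,k+j}_{\II}$ with exactly the claimed filtration indices, and that the summation procedure does not destroy the $b$-Sobolev control implicit in $s_t^{-\epsilon}H^\infty_{\bo}$. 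The second obstacle is the upgrade of Proposition~\ref{MetLef.43} from "$\rho_{\II}^{-1/2}H^M_{\bo}$ for all $M$" to "$s_t^\infty\CI$": conormality plus the module and commutator structure of $\cW,\cU$ developed in Section~\ref{Bounds} should give this, but it requires care because $s_t$ vanishes at both boundary hypersurfaces simultaneously, so one is propagating decay at a corner.
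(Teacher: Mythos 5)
Your proposal is correct in substance and rests on exactly the two ingredients the paper uses — the finite-order step of Proposition~\ref{MetLef.100} iterated in powers of $s_t$, and the uniform invertibility of Proposition~\ref{MetLef.43} — but the second half is organized differently, and more laboriously, than the paper's argument. The paper does not Borel-sum at this stage at all: it defines $u$ directly by applying $(\Lap+2)^{-1}$ to $g=s_t^{\epsilon}f\in\rho_{\II}^{-\ha}H^{\infty}_{\bo}(\Mm)$ (the twist by $s_t^{\epsilon}$ is what makes $f$ land in the space where Proposition~\ref{MetLef.43} applies, since the logarithms in $\cP^k$ are not in $L^2_{\bo}$ at $B_{\I}$), and then observes that $u-\sum_{j<N}s_t^ju_j$ satisfies an equation with right-hand side in $s_t^N(\rho_{\II}\cP^{k+N}+\rho_{\I}\rho_{\II}\cP^{k+N+1})$, hence lies in $s_t^{N-\epsilon}H^{\infty}_{\bo}(\Mm)$ for every $N$ — which \emph{is} the assertion \eqref{MetResMod.77}, no actual summation being required to state an asymptotic expansion. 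The paper explicitly defers asymptotic summation and removal of the $O(s_t^{\infty})$ remainder to the remark following the proposition, where it is used to upgrade to full polyhomogeneity. Your route (sum first, then correct the trapped $s_t^{\infty}$ error) proves the same thing and buys you the summed representative immediately, at the cost of having to justify the Borel summation and the extra solve; the paper's route is shorter because the comparison-with-partial-sums step is tautologically the definition of the expansion.

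Two of your flagged obstacles dissolve on inspection, and one small point is missing. First, $[\Lap,s_t]=0$ exactly: $s_t$ is pulled back from the base and is therefore constant on the fibers on which $\Lap$ acts — this is precisely what the paper means by ``$s_t$ commutes through $\Lap+2$'' — so the preservation of $s_t^N$-weights by $(\Lap+2)^{-1}$ is immediate from $(\Lap+2)^{-1}s_t^N=s_t^N(\Lap+2)^{-1}$ together with \eqref{MetLef.42}; no commutator estimates from Section~\ref{Bounds} are needed for this, and there is no genuine difficulty in ``propagating decay at the corner.'' Second, since your final $u$ is constructed rather than defined as $(\Lap+2)^{-1}f$, you must invoke the injectivity property \eqref{MetLef.35} to identify it with the solution named in the statement; you do not mention this. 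The concern about log-multiplicity bookkeeping is legitimate but is already settled by Proposition~\ref{MetLef.100}: the degree grows by exactly one per power of $s_t$ and the $\rho_{\I}^2\cP^{k+j+1,k+j}_{\II}$ term it produces sits inside the asserted $\rho_{\I}\rho_{\II}\cP^{k+j+1,k+j}_{\II}$.
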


\begin{proof} For any $\epsilon >0,$ $g=s_t^{\epsilon}f\in
  \rho_{\II}^{-\ha}H^{\infty}_{\bo}(\Mm)$ so
  $u=s_{t}^{-\epsilon}(\Lap+2)^{-1}g$ exists by \eqref{MetLef.42}. Comparing
  $u$ to the expansion cut off at a finite point gives \eqref{MetResMod.77}.
\end{proof}

This result can itself be iterated, asymototically summed and then the
rapidly decaying remainder term again removed to show the polyhomogeneity
of the solution for an asymptotically covergent sum over terms on the right
in \eqref{MetResMod.77}.

For the solution of the curvature equation the leading term is smooth
because of the special structure of the forcing term.

\begin{lemma}\label{MetResMod.78} If $f\in\CI(\Mm)$ has support
  disjoint from $B_{\II}$ then $u=(\Lap+2)^{-1}f$ is log-smooth and has an asymptotic
  expansion of the form 
\begin{equation}
u\simeq \rho _{\II}v_{0}+\sum\limits_{k\ge1}s_t^kv_k,\ v_k\in\rho
_{\II}\cP^k+\rho_{\I}\rho _{\II}\cP^{k+1,k}_{\II}.
\label{MetResMod.79}\end{equation}
\end{lemma}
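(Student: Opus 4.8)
The plan is to reduce the statement to an iterated application of Proposition~\ref{MetLef.100} together with Proposition~\ref{MetResMod.56}, being careful about the special structure forced by the hypothesis $\supp f\cap B_{\II}=\emptyset$. First I would observe that since $f$ vanishes identically near $B_{\II}$, in particular $f$ vanishes to infinite order there, so $f\in s_t^\infty\CI_F(\Mm)\subset\rho_{\II}\cP^0+\rho_{\I}\rho_{\II}\cP^1$ trivially (the angular operators $D_{\theta_z},D_{\theta_w}$ annihilate $f$ near $B_{\II}$ because $f$ itself vanishes there), so Proposition~\ref{MetResMod.56} already applies with $k=0$ and gives $u=(\Lap+2)^{-1}f\in s_t^{-\epsilon}H^\infty_{\bo}(\Mm)$ for every $\epsilon>0$ with an asymptotic expansion $u\simeq\sum_{j\ge0}s_t^ju_j$, $u_j\in\rho_{\II}\cP^j+\rho_{\I}\rho_{\II}\cP^{j+1,j}_{\II}$. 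The point of the present lemma is to sharpen the leading behavior: because the forcing term is supported away from $B_{\II}$, the leading term $u_0$ is not merely in $\rho_{\II}\cP^0=\rho_{\II}\CI_F(\Mm)$ but is of the form $\rho_{\II}v_0$ with $v_0$ genuinely smooth (indeed $v_0\in\CI(\Mm)$, no logarithms at order $j=0$), which is exactly the gain recorded in \eqref{MetResMod.79}.

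Concretely I would argue as follows. Apply Proposition~\ref{MetLef.100} with $k=0$: from $f\in\rho_{\II}\cP^0+\rho_{\I}\rho_{\II}\cP^1$ we get $u^{(0)}\in\rho_{\II}\cP^1+\rho_{\I}^2\rho_{\II}\cP^{2,1}_{\II}$ with $(\Lap+2)u^{(0)}-f\in s_t(\rho_{\II}\cP^1+\rho_{\I}\rho_{\II}\cP^2)$. But here I would track the leading term more carefully through the proof of Proposition~\ref{MetLef.100}. In the first half of that proof one restricts $f$ to $B_{\I}$ and applies Lemma~\ref{MetResMod.53}; since $f$ vanishes to infinite order at $B_{\II}$, its restriction to $B_{\I}$ is in $s_t^\infty\CI_F(B_{\I})$, so $f_{\I}$ lies in $\rho_{\II}\cP^{0,0}_{\I}$ — there is no $\log\rho_{\I}$ term at all — and Lemma~\ref{MetResMod.53} produces only a degree-one polynomial in $\log\rho_{\II}$ with coefficients in $\rho_{\II}\CI_F(B_{\I})$, whose constant term extends to the asserted $\rho_{\II}v_0$ with $v_0\in\CI(\Mm)$, while the $\log\rho_{\II}$-term is absorbed into higher $s_t$-order after the second (the $B_{\II}$) step. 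Feeding the remainder $s_tg_1\in s_t(\rho_{\II}\cP^1+\rho_{\I}\rho_{\II}\cP^2)$ back into Proposition~\ref{MetLef.100} with $k=1$, and iterating, one builds $u^{(N)}=\sum_{j\le N}s_t^jv_j$ with $v_k\in\rho_{\II}\cP^k+\rho_{\I}\rho_{\II}\cP^{k+1,k}_{\II}$ and $(\Lap+2)u^{(N)}-f\in s_t^{N+1}(\rho_{\II}\cP^{N+1}+\rho_{\I}\rho_{\II}\cP^{N+2})$; this is exactly because $s_t$ commutes through $\Lap+2$, as noted after Proposition~\ref{MetLef.100}.

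To finish I would asymptotically sum the $s_t^kv_k$ (Borel-type summation in the parameter $s_t$, legitimate because the $\cP^k$ only grow in log-degree) to a single $u_\infty$ with $(\Lap+2)u_\infty-f=:g\in s_t^\infty\CI(\Mm)\subset\rho_{\II}^{-\ha}H^\infty_{\bo}(\Mm)$, then set $u-u_\infty=(\Lap+2)^{-1}g$, which lies in $\rho_{\II}^{-\ha}H^\infty_{\bo}(\Mm)$ by Proposition~\ref{MetLef.43} (the case $k=\infty$ of \eqref{MetLef.42}). Since $s_t^\infty\CI(\Mm)$-forcing actually gives an $s_t^\infty$ correction — one more pass of Proposition~\ref{MetResMod.56} applied to $g$, whose expansion terms all start at order $\rho_{\II}\cP^{k+j}$ and are themselves $O(s_t^\infty)$ by construction — the remainder $u-u_\infty$ is smooth and rapidly vanishing at both boundaries, so $u$ has the full polyhomogeneous (indeed log-smooth) expansion \eqref{MetResMod.79}; log-smoothness in the sense of \eqref{MetLef.8} is immediate since all coefficients lie in $\CI_F$ and the powers of $\rho_{\I},\rho_{\II}$ that occur are the nonnegative integers with the stated linear log-multiplicity. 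The main obstacle is the bookkeeping in the second paragraph: one must verify that the absence of a $\log\rho_{\I}$ term in $f_{\I}$ really propagates to give a genuinely smooth (log-free) leading coefficient $v_0$ and that no logarithm sneaks in from the $B_{\II}$ step at $s_t$-order zero — this rests on the precise indicial-root computations in Lemmas~\ref{MetResMod.53}, \ref{MetResMod.63} and \ref{MetResMod.72} (roots $1,-2$ on $B_{\I}$; roots $-1,2$, shifted to smooth-admissible on $B_{\II}$ after conjugation), and on the observation that the $\log\rho_{\II}$ enhancement in Lemma~\ref{MetResMod.53} only feeds the $\rho_{\I}\rho_{\II}\cP^{k+1,k}_{\II}$ summand, consistent with the form stated in \eqref{MetResMod.79}.
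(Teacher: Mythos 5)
Your strategy coincides with the paper's: the paper does not actually write out a proof of this lemma, but the sentence immediately preceding it prescribes exactly what you do, namely iterate Proposition~\ref{MetLef.100} (equivalently Proposition~\ref{MetResMod.56}, using that $s_t$ commutes through $\Lap+2$), asymptotically sum, and remove the $O(s_t^\infty)$ conormal remainder with Proposition~\ref{MetLef.43}. The only place you go beyond the paper is in justifying the special, log-free form of the leading term, and there your mechanism is not right, although the conclusion is reachable. First a small slip: support disjoint from $B_{\II}$ gives $f\in\rho_{\II}^{\infty}\CI(\Mm)$, not $s_t^{\infty}\CI(\Mm)$, since $s_t$ also vanishes on $B_{\I}$ where $f$ need not. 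More importantly, if the $B_{\I}$-solve really produced a term $\rho_{\II}(\log\rho_{\II})u_1$ with $u_1$ not vanishing on $B_{\I}$, it could not be ``absorbed into higher $s_t$-order'': for $k\ge1$ every term $s_t^kv_k$ with $v_k\in\rho_{\II}\cP^k+\rho_{\I}\rho_{\II}\cP^{k+1,k}_{\II}$ lies in $\rho_{\I}\rho_{\II}^2$ times polynomials in the logarithms, whereas $\rho_{\II}(\log\rho_{\II})u_1$ is neither $O(\rho_{\I})$ nor $O(\rho_{\II}^2)$. The correct point is that no such term is produced in the first place: $f\big|_{B_{\I}}$ vanishes to infinite order at the cusp $B_{\I}\cap B_{\II}$, so the forcing never resonates with the indicial root $1$ of $\Lap_{\I}+2$, the log-enhancement $k\mapsto k+1$ in Lemma~\ref{MetResMod.53} is simply not excited, and the solution near the cusp is the smooth decaying indicial solution $\rho_{\II}\cdot\CI$ (the roots $1$ and $-2$ differ by a nonzero integer and only the root $1$ occurs in the expansion). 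With that substitution your argument is complete and consistent with the later use of the lemma, where the leading coefficient of the solution is asserted to be genuinely smooth.
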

\noindent Note that log-smoothness follows from the fact that $s_t=a\rho
_{\I}\rho _{\II},$ $a\in\CI_F(\Mm)$ so each term in the expansion can be
written as a polynomial in $\rho _{\I},$ $\rho _{\I}\log\rho _{\I},$ $\rho
_{\II}$ and $\rho _{\II}\log\rho _{\II}$ of degree at least $2k.$ 

\section{Polyhomogeneity for the curvature equation}\label{CurvatureEqn}

Under a conformal change from the grafted metric $h$ with curvature $R$ to
$e^{2f}h$ the condition for the curvature of the new metric to be $-1$
given by \eqref{MetLef.9}.  To construct the canonical metrics on the
fibers we proceed, as in the linear case discussed above, to solve
\eqref{MetLef.9} in the sense of formal power series at the two boundaries
above $s_t=0$ and then, using the Implicit Function Theorem deduce that the
actual solution has this asymptotic expansion.

\begin{lemma} For the grafted metric there is a formal power series
\begin{equation}
\sum\limits_{k\ge
  2}s_t^kf_k,\ f_2\in\CI_F(\Mm),\ f_k\in\rho_{\II}\cP^{k-2}+\rho _{\I}\rho
_{\II}\cP^{k-1,k-2}_{\II},\ k\geq 3,
\label{MetLef.56}\end{equation}
solving \eqref{MetLef.9}.
\end{lemma}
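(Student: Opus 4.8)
The plan is to solve the curvature equation \eqref{MetLef.9} order by order in $s_t$, bootstrapping off the linear theory of \S\ref{Formal}. Recall that the curvature equation in the form $(\Lap+2)f + (R+1) = -e^{2f}+1+2f$ has a right-hand side that is at least quadratic in $f$. Since the grafted metric has $R(h)+1$ supported disjointly from $B_{\II}$ and vanishing to order $s_t^2$ at $B_{\I}$ (see \eqref{MetLef.18}), we may write $R(h)+1 = s_t^2 r$ with $r\in\CI(\Mm)$ supported away from $B_{\II}$. The first step is to set $f_2 = -(\Lap_{\I}+2)^{-1}(r|_{B_{\I}})$, extended off $B_{\I}$; by Lemma~\ref{MetResMod.78} (applied to the restriction to $B_{\I}$, or rather the leading-order model there) this gives $f_2\in\CI_F(\Mm)$, and $(\Lap+2)(s_t^2 f_2) + s_t^2 r \in s_t^3\left(\rho_{\II}\cP^0 + \rho_{\I}\rho_{\II}\cP^1\right)$ since $s_t$ commutes through $\Lap+2$ and one order is gained.

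Next I would proceed inductively. Suppose we have found $f_2,\dots,f_{k-1}$ (with $f_j$ in the stated space) so that, writing $F_{k-1} = \sum_{j=2}^{k-1}s_t^j f_j$, the residual satisfies
\begin{equation*}
(\Lap+2)F_{k-1} + (R(h)+1) + \left(-e^{2F_{k-1}}+1+2F_{k-1}\right) \in s_t^k\left(\rho_{\II}\cP^{k-2} + \rho_{\I}\rho_{\II}\cP^{k-1}\right).
\end{equation*}
The nonlinear term $-e^{2F_{k-1}}+1+2F_{k-1}$ is a convergent power series in $F_{k-1}$ starting at the quadratic term $-2F_{k-1}^2$; since $F_{k-1}$ begins at order $s_t^2$, this nonlinear contribution begins at order $s_t^4$, and in particular its expansion through order $s_t^{k-1}$ is already determined by $f_2,\dots,f_{k-1}$ — this is the key point that makes the iteration well-posed. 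Let $g_k\in\rho_{\II}\cP^{k-2}+\rho_{\I}\rho_{\II}\cP^{k-1}$ be $s_t^{-k}$ times this residual. I then apply Proposition~\ref{MetLef.100} with the shift in indices (here the forcing lies in $\rho_{\II}\cP^{k-2}+\rho_{\I}\rho_{\II}\cP^{k-1}$, so the proposition produces $u\in\rho_{\II}\cP^{k-1}+\rho_{\I}^2\rho_{\II}\cP^{k,k-1}_{\II}$) to obtain $f_k$ with $(\Lap+2)f_k - g_k \in s_t(\rho_{\II}\cP^{k-1}+\rho_{\I}\rho_{\II}\cP^{k})$. Setting $F_k = F_{k-1}+s_t^k f_k$ and using that multiplication by $s_t^k$ intertwines with $\Lap+2$, the residual is pushed to order $s_t^{k+1}$ in the same class of spaces, closing the induction. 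Note that at the first nontrivial stage $k=2$ the forcing $r$ is smooth and supported off $B_{\II}$, so $f_2\in\CI_F(\Mm)$; only for $k\ge3$ do the logarithmic polynomial spaces $\cP^{k-2}$ genuinely enter, consistent with \eqref{MetLef.56}.

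The main obstacle is bookkeeping the logarithmic multiplicities: one must check that applying Proposition~\ref{MetLef.100} repeatedly does not cause the degree of the $\log\rho_{\I}$, $\log\rho_{\II}$ polynomials to grow faster than linearly in $k$, and that the feedback of the nonlinear term $-e^{2F}+1+2F$ — which involves products of the $f_j$ and hence products of these polynomials — stays within $\rho_{\II}\cP^{k-2}+\rho_{\I}\rho_{\II}\cP^{k-1}$ at order $s_t^k$. The degree count works because a product $s_t^i f_i\cdot s_t^j f_j$ contributing at order $s_t^k$ has $i+j=k$ with $i,j\ge2$, so each factor carries a polynomial of degree at most $i-2$ resp.\ $j-2$, and $(i-2)+(j-2) = k-4 < k-2$; thus the product of logarithmic polynomials stays comfortably inside $\cP^{k-2}$ (indeed inside $\cP^{k-4}$), and the extra factor $s_t^2 = (a\rho_{\I}\rho_{\II})^2$ only improves the vanishing. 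Combined with the one-step gain from Proposition~\ref{MetLef.100}, the induction is uniform in $k$, which yields the asserted formal power series.
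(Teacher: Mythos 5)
Your strategy is sound and the lemma can be proved along these lines, but your organization is genuinely different from the paper's, and the difference matters for the log-degree count. The paper does not solve degree by degree in $s_t$. It writes the formal solution as $f\simeq\sum_{i\ge1}g_i$ with $g_1=-(\Lap+2)^{-1}(R+1)$ and each subsequent term determined by $-(\Lap+2)g_i=g_{i-1}P_i(g_1,\dots,g_{i-1})$, where $P_i$ has no constant term; each $g_i$ is produced by one application of the exact inverse, whose complete expansion is supplied by Lemma~\ref{MetResMod.78} and Proposition~\ref{MetResMod.56}, and the inductive statement is that $g_i\simeq\sum_{j\ge 2i}s_t^jg_{i,j}$ with $g_{i,j}\in\rho_{\II}\cP^{j-2i}+\rho_{\I}\rho_{\II}\cP^{j-2i+1,j-2i}_{\II}$. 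The coefficient of $s_t^k$ in the final series is then the finite sum $\sum_{2i\le k}g_{i,k}$, whose worst term (that with $i=1$) lies in $\rho_{\II}\cP^{k-2}+\rho_{\I}\rho_{\II}\cP^{k-1,k-2}_{\II}$, exactly as stated in \eqref{MetLef.56}. Your scheme, which invokes Proposition~\ref{MetLef.100} once for each power of $s_t$, accumulates one logarithm per step: as you yourself record, the proposition applied to forcing in $\rho_{\II}\cP^{k-2}+\rho_{\I}\rho_{\II}\cP^{k-1}$ returns $f_k\in\rho_{\II}\cP^{k-1}+\rho_{\I}^2\rho_{\II}\cP^{k,k-1}_{\II}$, which carries one more log than the membership $f_k\in\rho_{\II}\cP^{k-2}+\rho_{\I}\rho_{\II}\cP^{k-1,k-2}_{\II}$ asserted by the lemma, and you never reconcile the two. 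This weaker conclusion still gives log-smoothness of the summed series (the log degree stays dominated by the power of the boundary defining functions, since $s_t^k$ contributes $\rho_{\I}^k\rho_{\II}^k$), so nothing downstream would break, but as written your induction proves a slightly weaker statement than the one in the paper. The repair is essentially the paper's reorganization: group the terms by the generation at which the nonlinearity creates them, so that the $i$-th generation starts at order $s_t^{2i}$ with log degree zero and its degree at order $s_t^k$ is $k-2i\le k-2$. Your handling of the nonlinear feedback (the count $(i-2)+(j-2)=k-4$ for products contributing at order $s_t^k$) and of the smoothness of the leading coefficient $f_2$ is correct and consistent with the paper.
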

\noindent The $\cP^{k}$ are defined in \eqref{MetLef.46}; in the last term
there is no factor of $(\log\rho_{\II})^{k-1}.$  

\begin{proof} Since $R+1\in s_t^2\CI(\Mm)$ is supported away from
  $B_{\II},$ Lemma~\ref{MetResMod.78} shows that $g_1=-(\Lap+2)^{-1}(R+1)$
  is of the form \eqref{MetLef.56}. We look for the formal power series
  solution of the non-linear problem as 
\begin{equation}
f\simeq\sum\limits_{k\ge1}g_k
\label{MetLef.57}\end{equation}
Inserting this sum into the equation gives
\begin{equation}
-(\Lap+2)(\sum_{i\geq 1}g_i)=\sum_{j\geq 2}\frac{2^j}{j!}(g_1+\sum_{k\geq 2}g_k)^j+1+R.
\end{equation}
For each $i\ge 2$ we fix $g_i$ by
\begin{multline}
-(\Lap+2)g_i=
\sum_{j\geq 1}\frac{2^j}{j!}(g_1+\sum_{i-1\geq k\geq 2}g_k)^j
-\frac{2^j}{j!}(g_1+\sum_{i-2\geq k\geq 2}g_k)^j\\
=g_{i-1}P_i(g_1, g_2,...g_{i-1})
\end{multline}
where $P_i$ is a formal power series in $g_1,...g_{i-1}$ without constant
term.

Proceeding by induction we claim that 
\begin{equation}
g_i\simeq\sum\limits_{j\ge 2i}s_t^{j}g_{i,j},\ g_{i,j}\in\rho
_{\II}\cP^{j-2i}+\rho_{\I}\rho _{\II}\cP^{j-2i+1,j-2i}_{\II}.
\label{MetLef.58}\end{equation}
We have already seen that this holds for $i=1$ and using the obvious
multiplicitivity properties 
\begin{equation*}
\cP^{k}\cdot\cP^{j}\subset\cP^{j+k},\ \cP^{k}\cdot\cP^{j,j-1}_{\II}\subset\cP^{j+k,j+k-1}_{\II}
\label{MetLef.63}\end{equation*}
it follows from the inductive assumption, that \eqref{MetLef.58} holds for all smaller
indices, that 
\begin{equation}
\begin{gathered}
g_{i-1}P_i(g_1, g_2,...g_{i-1})\\
\simeq s_t^{2i} \sum\limits_{k\geq 2, j \geq 2i-2} \left(\rho_{\II}\cP^{j-2i+2} + \rho_{\I} \rho_{\II}\cP_{\II}^{j-2i+3, j-2i+2}\right) 
\left(\rho_{\II}\cP^{k-2} + \rho_{\I} \rho_{\II}\cP_{\II}^{k-1, k-2} \right) \\
\simeq
\sum\limits_{k\ge 2i}s_k^jF_k,\ F_k\in \rho
_{\II}\cP^{k-2i}+\rho_{\I}\rho _{\II}\cP^{k-2i+1, k-2i}_{\II}.
\end{gathered}
\label{MetLef.59}\end{equation}
Applying Proposition~\ref{MetResMod.56} we recover the inductive hypothesis
at the next step. Then \eqref{MetLef.56} follows from \eqref{MetLef.57} and
\eqref{MetLef.58}. 
\end{proof}

Summing the formal power series solution gives a polyhomogeneous function with
\begin{equation}
-\Lap f_0=R+e^{2f_0}+g, g\in O(s_t^\infty).
\end{equation}
Now we look for the solution as a perturbation $f=f_0+\tilde f,$ so $\tilde f$ satisfies
\begin{equation}
-\Lap \tilde f=-g+e^{2f_0}(e^{2\tilde f}-1).
\label{MetLef.52}\end{equation}
which can be rewritten as
$$
\tilde f =-(\Lap+2)^{-1}\left(2\tilde f(e^{2f_0}-1)+e^{2f_0}(e^{2\tilde
  f}-1-2\tilde f)-g\right). 
$$
So consider the nonlinear operator
\begin{equation}
K: \tilde f \mapsto (\Lap+2)^{-1}\left(2\tilde
f(e^{2f_0}-1)+e^{2f_0}(e^{2\tilde f}-1-2\tilde f)-g \right) 
\end{equation}
which acts on $s_t^{N}H_b^{M}(\Mm)$ for all $N\ge1$ and $M>2.$ Note that
for $M>2$, the b-space $H_b^{M}(\Mm)$ is closed under multiplication,
therefore this weighted Sobolev space is also an algebra. Since the nonlinear
terms are at least  quadratic, $K$ is well-defined on this domain. The
solution to \eqref{MetLef.52} satisfies $\tilde f=K(\tilde f).$

\begin{proposition}\label{MetLef.53} For any $M>1$ and $N\geq 1$ there is a
  unique solution $\tilde f \in  s_t^{N}H_b^{M}(\Mm)$ to the equation
  \eqref{MetLef.52}.\end{proposition}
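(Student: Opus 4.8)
The plan is to apply the contraction mapping principle to the nonlinear operator $K$ on a small closed ball in the Banach space $X_{N,M}=s_t^{N}H_b^{M}(\Mm)$, so all of the work is in verifying that $K$ is a well-defined self-map of a suitable ball and is a contraction there. First I would record the three structural facts on which everything rests: (i) by Proposition~\ref{MetLef.43} with $k=M$ together with the weight bookkeeping of Lemma~\ref{MetLef.22}, $(\Lap+2)^{-1}$ is bounded on $s_t^{N}H_b^{M}(\Mm)$ for every $N\ge1$ and $M\in\bbN$ --- indeed $\rho_{\II}^{-\ha}H_b^M$ and $s_t^N H_b^M$ differ only by a smooth positive weight that commutes harmlessly through the mapping property, and multiplication by $s_t^N$ commutes with $\Lap+2$; (ii) for $M>1$ (one can take $M$ a positive integer, or interpolate) $H_b^M(\Mm)$ is a Banach algebra, hence so is $X_{N,M}$ once one notes $s_t^{N}H_b^M\cdot s_t^{N}H_b^M\subset s_t^{2N}H_b^M\subset s_t^{N}H_b^M$; (iii) $f_0$ is log-smooth and bounded, so $e^{2f_0}-1$ and $e^{2f_0}$ lie in $L^\infty$ and in fact act by multiplication boundedly on $H_b^M(\Mm)$ (log-smooth functions are conormal, so all $\Diffb*$-derivatives are bounded), and $g\in s_t^\infty\CI(\Mm)\subset X_{N,M}$.

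Granting these, I would estimate $K$ as follows. Write $K(\tilde f)=-(\Lap+2)^{-1}\bigl(L\tilde f + Q(\tilde f) - g\bigr)$ with $L\tilde f = 2\tilde f(e^{2f_0}-1)$ linear and $Q(\tilde f)=e^{2f_0}(e^{2\tilde f}-1-2\tilde f)$ the genuinely quadratic-and-higher piece. The term $L$ is bounded on $X_{N,M}$ but \emph{not} small in general; however, its coefficient $e^{2f_0}-1$ is $O(s_t^{?})$ only where $f_0$ vanishes, so $L$ is not a contraction by itself. The honest fix is to absorb $L$ into the linear operator: solve instead the modified fixed-point equation $\tilde f = -(\Lap+2+2(e^{2f_0}-1))^{-1}\bigl(Q(\tilde f)-g\bigr)$, i.e. invert $\Lap + 2e^{2f_0}$. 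This operator is a small bounded perturbation of $\Lap+2$ \emph{near $B_{\II}$} (where $f_0=O(\rho_{\II})$ is small, so $2(e^{2f_0}-1)$ has small $X_{N,M}$-multiplier norm after localizing), and away from $B_{\II}$ it is the Laplacian of a genuine metric plus a positive bounded potential, hence invertible on the interior Dirichlet spaces by the same Riesz-representation argument used for $\Lap+2$; patching with a partition of unity and a Neumann series for the small boundary piece gives boundedness of $(\Lap+2e^{2f_0})^{-1}$ on $s_t^N H_b^M(\Mm)$ for all $N\ge1$, $M>1$. With this replacement the map $\tilde K(\tilde f)=-(\Lap+2e^{2f_0})^{-1}(Q(\tilde f)-g)$ has the same fixed points as $K$ and now the only nonlinearity is at least quadratic.

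Then the contraction estimate is routine algebra in the Banach algebra $X_{N,M}$: using $e^{s}-1-s=\sum_{j\ge2}s^j/j!$ and the algebra property, one gets $\|Q(\tilde f)\|_{X}\le C\|\tilde f\|_X^2$ for $\|\tilde f\|_X\le1$, and $\|Q(\tilde f_1)-Q(\tilde f_2)\|_X\le C(\|\tilde f_1\|_X+\|\tilde f_2\|_X)\|\tilde f_1-\tilde f_2\|_X$, with $C$ depending only on $\|e^{2f_0}\|$ and $M,N$. Hence for $\|\tilde f\|_X\le\delta$ small, $\|\tilde K(\tilde f)\|_X\le C_0(\delta^2+\|g\|_X)\le\delta$ and $\|\tilde K(\tilde f_1)-\tilde K(\tilde f_2)\|_X\le 2C_0\delta\|\tilde f_1-\tilde f_2\|_X\le\tfrac12\|\tilde f_1-\tilde f_2\|_X$ once $\delta$ is chosen small; since $g\in s_t^\infty\CI$ its $X_{N,M}$-norm can itself be made small by taking the neighbourhood of the singular fibre small (equivalently, $g$ is genuinely there, it does not need to be small, but $\tfrac12\|g\|_X\le\delta$ can be arranged as $\|g\|_X$ is finite and the ball radius $\delta$ is at our disposal --- more precisely pick $\delta$ with $C_0\delta\le\tfrac14$ and then the fixed point lies in the ball of radius $2C_0\|g\|_X$). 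The Banach fixed point theorem then yields a unique $\tilde f$ in that ball; uniqueness in the full space $s_t^N H_b^M(\Mm)$ follows because any solution is automatically small --- $g$ has arbitrarily high decay, so bootstrapping $\tilde f = \tilde K(\tilde f)$ forces $\tilde f\in s_t^{N'}H_b^M$ for all $N'$ and in particular into the small ball.

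The main obstacle is step two: the linear term $2\tilde f(e^{2f_0}-1)$ is not a contraction, so one cannot apply the fixed point theorem to $K$ as literally written, and one must instead establish the mapping property of $(\Lap+2e^{2f_0})^{-1}$ --- or, equivalently, of $(\Lap+2+P)^{-1}$ for the bounded nonnegative-away-from-the-boundary potential $P=2(e^{2f_0}-1)$ --- on the weighted b-Sobolev scale. Near $B_{\II}$ this is a Neumann-series perturbation of Proposition~\ref{MetLef.43} (legitimate because $f_0$, hence $P$, vanishes at $B_{\II}$, so $\|P\cdot\|$ as a multiplier on $\rho_{\II}^{-\ha}H_b^M$ can be made $<1$ after cutting off to a small collar); globally one reruns the Riesz-representation argument of the two Propositions in \S\ref{Bounds} with $\Lap+2$ replaced by $\Lap+2e^{2f_0}$, which is still coercive on the Dirichlet space $D$ because $e^{2f_0}$ is bounded below by a positive constant. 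Everything else --- the algebra property of $H_b^M$ for $M>1$, the quadratic estimates on $Q$, and the bootstrap for uniqueness --- is standard once the linear inverse is in hand.
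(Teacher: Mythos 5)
Your overall architecture --- a fixed-point problem for $\tilde f$ in a weighted b-Sobolev space, exploiting the quadratic structure of the nonlinearity together with the linear theory of \S\ref{Bounds} --- is what the Introduction advertises, but it is not what the written proof of Proposition~\ref{MetLef.53} actually does: there the solution is sought as a series $\tilde f=s_t^N\sum_{i\ge2}s_t^if_i$ and the coefficients are determined one at a time. The point you flag as the main obstacle --- that $2\tilde f(e^{2f_0}-1)$ is linear in $\tilde f$ with a coefficient that is not globally small, so $K$ as literally written need not be a contraction --- is a real issue, but the paper disposes of it differently from you: since $e^{2f_0}-1=O(s_t^2)$, the linear term contributes $(e^{2f_0}-1)f_{k-2}$ to the equation for $f_k$, i.e.\ only through \emph{previously determined} coefficients, so the system is triangular in the $s_t$-grading and no smallness is required. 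Your alternative --- absorbing the linear term into the operator and inverting $\Lap+2e^{2f_0}$, which remains coercive on the Dirichlet space because $e^{2f_0}\ge c>0$, with higher tangential regularity obtained by rerunning \S\ref{Bounds} with the conormal multiplier $e^{2f_0}$ --- is legitimate and buys something the paper's induction does not by itself deliver: an honest contraction, hence an actual (not merely formal) solution together with uniqueness in a ball, rather than one more asymptotic series.

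Two points need repair. First, the weight bookkeeping in your step (i) is wrong as stated: $\rho_{\II}^{-\ha}H^M_{\bo}(\Mm)$ and $s_t^NH^M_{\bo}(\Mm)=\rho_{\I}^N\rho_{\II}^NH^M_{\bo}(\Mm)$ (up to the smooth positive factor $a^N$) do not differ by an invertible weight; the discrepancy $\rho_{\I}^N\rho_{\II}^{N+\ha}$ is a non-invertible multiplier. What is true, and what you should use, is that $s_t^N$ commutes through $\Lap+2$, so Proposition~\ref{MetLef.43} gives boundedness of $(\Lap+2)^{-1}$ on $s_t^N\rho_{\II}^{-\ha}H^M_{\bo}(\Mm)$; the fixed point must then be run in that slightly larger space, which is still an algebra contained in $L^\infty$ for $N\ge1$ and $M>2$ since $s_t^N\rho_{\II}^{-\ha}H^M_{\bo}=\rho_{\I}^N\rho_{\II}^{N-\ha}H^M_{\bo}\subset H^M_{\bo}.$ Relatedly, the algebra and $L^\infty$ thresholds on the $4$-dimensional total space require $M>2$ (as in the paragraph preceding the Proposition), not $M>1.$ Second, the closing uniqueness bootstrap is under-argued: higher $s_t$-decay of a competing solution does not by itself force its norm into the contraction ball unless you quantify the gain per bootstrap step (e.g.\ after arranging $\sup s_t<1$ by shrinking the parameter neighbourhood); alternatively, uniqueness follows from uniqueness of the hyperbolic metric on each regular fiber. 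Neither defect is fatal, but both must be addressed for the argument to close.
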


\begin{proof}
We construct the solution $\tilde f$ by iteration. Let $\tilde f =
s_t^N\sum_{i\geq 2} {s_t}^i f_i$, put it into equation \eqref{MetLef.52},
divide by the common factor $s_t^N$ on both sides and then we get
\begin{equation}\label{MetLef.99}
\sum\limits_{i\geq 2} s_t^i f_i=K(\sum s_t^i f_i)=
(\Lap+2)^{-1}\left( (e^{2f_0}-1) \sum s_t^i f_i + s_t^N (\sum s_t^i f_i)^2 +s_t^{-N}g
\right)
\end{equation}
The right hand side belongs to $(\Lap+2)^{-1}(O(s_t^2))$ because of the
quadratic structure and the fact that $e^{2f_0}-1 \in O(s_t^2)$. Therefore the right hand side is the form $(\Lap+2)^{-1}(s_t h)$ where $s_t h \in \rho_{\II}^{-\ha}H^{M}_{\bo}(\Mm)$ so this quantity is well-defined using Proposition~\ref{MetLef.43}.

Now we proceed by induction. Assume that the first k terms in the expansion
have been solved, then the equation for the next term $f_k$ is given by
$$
f_k=(\Lap+2)^{-1}\left((e^{2f_0}-1) f_{k-2} + s_t^N Q(f_0, ...f_{k-1})  \right).
$$
where the polynomial $Q$ on the right hand side is a quadratic polynomial
of order $k-N.$ By using the invertibility property in Proposition~\ref{MetLef.43}, we
can now solve $f_{k}$. Therefore the induction gives us the total expansion for $\tilde f$.
\end{proof}

\begin{proof}[Proof of Theorem~\ref{MetLef.5}] From
  Proposition~\ref{MetLef.53} we obtain the solution, $f=f_0+\tilde f$, to
  the curvature equation $R(e^{2f}h)=-1.$ Since $f_0$ is the formal power
  series and $\tilde f \in s_t^\infty \CI(\Mm)$, we get the solution with
  required regularity.
\end{proof}

\providecommand{\bysame}{\leavevmode\hbox to3em{\hrulefill}\thinspace}
\providecommand{\MR}{\relax\ifhmode\unskip\space\fi MR }
\providecommand{\MRhref}[2]{%
  \href{http://www.ams.org/mathscinet-getitem?mr=#1}{#2}
}
\providecommand{\href}[2]{#2}

\end{document}